\newtheorem{claim}{\bf \t}[part]
\newtheorem{Definition}{Definition}[part]
\newtheorem{Lemma}{Lemma}[part]
\numberwithin{Assumption}{section}
\numberwithin{Corollary}{section}
\numberwithin{Definition}{section}
\numberwithin{equation}{section} \numberwithin{Example}{section}
\numberwithin{Lemma}{section} \numberwithin{Proposition}{section}
\numberwithin{Remark}{section} \numberwithin{Theorem}{section}
\def \E{{\mathbb{E}}}
\def \D{{\mathbb{D}}}
\def \P{{\mathbb{P}}}
\def \L{{\mathbb{L}}}
\def\vare{\varepsilon}
\def\text#1{{\rm #1}}
\def \eref#1{\hbox{(\ref{#1})}}
\definecolor{dg}{rgb}{0, 0.5, 0}
\def \eref#1{\hbox{(\ref{#1})}}
\newcommand{\beq}{\begin{equation}}
\newcommand{\eeq}{\end{equation}}
\newcommand{\bea}{\begin{eqnarray}}
\newcommand{\eea}{\end{eqnarray}}
\newcommand{\beas}{\begin{eqnarray*}}
\newcommand{\eeas}{\end{eqnarray*}}
\newcommand{\EE}{{\mathbb E}}
\newcommand{\R}{{\mathbb R}}
\newenvironment{proof}[1][Proof]{\textbf{#1.} }{\ \rule{0.5em}{0.5em}}
\newtheorem{theorem}{Theorem}[section]
\newtheorem{definition}[theorem]{Definition}
\newtheorem{proposition}[theorem]{Proposition}
\newtheorem{remark}[theorem]{Remark}
\let\Section=\section
\def\section{\setcounter{equation}{0}\Section}
\begin{document}
\title{ On optimal mean-field type control problems of stochastic systems with jump processes under partial information\thanks{Y. Hu is partially supported by a grant from the Simons Foundation No.209206.
D. Nualart is supported by the NSF grant DMS1208625.
Q. Zhou is supported by the National Natural
Science Foundation of China (No 11001029 and 11371362) and the
Fundamental Research Funds for the Central Universities (No BUPT2012RC0709). }}

\author{ \text{Yaozhong Hu}$^1$ \footnote{E-mail: hu@math.ku.edu}\;\  \ \ \text{David Nualart}$^2$ \footnote{E-mail: nualart@math.ku.edu}\  \ \ \text{Qing Zhou}$^3$ \footnote{Corresponding author. E-mail: zqleii@bupt.edu.cn}
\\
 \small  1, 2. Department of Mathematics, University of Kansas, Lawrence, Kansas, 66045  USA   \\
 \small 3. School of Science, Beijing University of Posts and Telecommunications, Beijing 100876, China}\,

\date{}
\maketitle

\begin{abstract}
This paper considers the problem of partially observed optimal control for forward
 stochastic systems which are driven by Brownian motions and an independent Poisson
 random measure with a feature that the cost functional is of mean-field type.
When all the system coefficients and the objective performance functionals are
allowed to be random, possibly non-Markovian,  Malliavin calculus is employed to
derive a maximum principle for the optimal control of such a system where the adjoint
 process is explicitly expressed. We also investigate the mean-field type optimal control
  problems for systems driven by mean-field type stochastic differential equations
  (SDEs in short) with jump processes, in which the coefficients contain not only
  the state process but also its marginal distribution under partially observed
  information. The maximum principle is established using convex variational technique
  with an illustrating example about linear-quadratic optimal control.
\end{abstract}

\vspace{.08in} \noindent \textbf{Keywords:}  maximum principle; mean-field type; partial information; Girsanov's theorem; forward stochastic differential equations; Malliavin calculus; jump diffusion

\vspace{.08in} \noindent \textbf{\bf AMS subject classification} 60H10; 60HXX; 60H07; 60J75

\section{Introduction }
Let $T>0$ be a fixed time horizon and let $(\Omega, \mathscr{F},
\P_0)$ be a probability space equipped with a  right continuous
filtration  $(\mathscr{F}_t)_{t\in [0,T]}$ satisfying the usual
conditions (we use the notation $\P_0$ here to reserve $\P$ for a
future use). Let
 $((W_1(t),W_2(t))\,, 0\le t\le T)$  be a two dimensional $\mathscr{F}_t$-Brownian motion and let $N(dt,dz)$ be
 a $\mathscr{F}_t$-Poisson  random measure on $[0, T]\times \R_0$ with  intensity  $\mu(dz)$, independent of the Brownian
 motion $W_1$ and $W_2$,  where $\R_0=\R\setminus\{0\}$.   We denote
the compensated  Poisson measure  by  $\tilde{N}(dt,dz):=N(dt,dz)-\mu(dz)dt$.




In this paper we shall study the mean field  optimal control problems
which have the following characteristics.  The state equation is
given by the following mean field stochastic differential equation
with jumps:
\begin{eqnarray}\label{Anew}\left\{
\begin{array}{lll}
    dx^\nu(t)\!\!\!\!\!\!&= b(t,x^\nu(t),\E_0[x^\nu(t)],v(t))dt+\sigma(t,x^\nu(t),\E_0[x^\nu(t)],v(t))dW_1(t)\\
& \quad+\displaystyle\int_{ \R_0}\gamma(t,x^\nu(t-),\E_0[x^\nu(t)],v(t),z)\tilde{N}(dt,dz),\\
x^\nu(0)\!\!\!\!\!\!&=x_0,  \quad t\in[0,T], \\
\end{array}%
\right.
\end{eqnarray}
where $v(\cdot)$ is a control process taking values in a nonempty,
closed convex subset $U\subseteq\R$ and the expectation $\E_0 $ is related to the probability measure $ \P_0 $.  The conditions on the
coefficients   will be made specific in Section 4.  To describe the
conditions on the control $\nu$, we assume the state process
$x^\nu(t)$ is not completely observable. Instead, it is partially
observed and the observation is corrupted with noise. The observation equation is given by the following
equation:
\begin{eqnarray}&&\left\{%
\begin{array}{lll}  dY(t) &=&  h(t,x^v(t))dt+dW_2(t) ,\\
Y(0)&=&0,
\end{array}%
\right.\label{observation}
\end{eqnarray}
Thus the control process $v(t)$ will be an ${\mathscr{F}} _t^Y$-adapted
processes. More precisely, we give the following definition of
admissible controls.
\begin{definition}\label{d.adm} Let $U \subseteq\R$ be  a nonempty, closed
and convex subset which  will be the range of control
$v$.  Let ${\mathscr{F}}_t^Y=\sigma(Y_s, 0\le s\le t)$ be the
$\sigma$-algebra generated by $Y$. A control process $v:
[0,T]\times\Omega\rightarrow U$ is called admissible if $v(t)$ is
${\mathscr{F}}_t^Y$-adapted  and $\sup\limits_{0\leq t\leq
T}\E_0|v(t)|^2<\infty.$ The set of all admissible controls is denoted
by ${\mathscr U}_{ad}$.
\end{definition}

We introduce the following  cost functional
\begin{eqnarray}\label{performance}J(v(\cdot))&=& \E_0 \left[\int_0^T l(t, x^v(t), \E_0 [f(x^v(t))], v(t)  )dt
+\phi(x^v(T),\E_0 [g(x^v(T))] )\right]\,,
\end{eqnarray}
where $l:[0,T]\times\R\times\R\times U\rightarrow\R$ and
$\phi:\R\times\R\rightarrow\R$ are given mappings. $\E_0$ is the expectation with
respect to the probability measure $\P_0$. $f:\R\to\R$ and $g:\R\to\R$ are given functions such that
$\E_0[|f(x^v(t))|]<\infty,$ for all $t$ and $\E_0[|g(x^v(T))|]<\infty.$

Now  we can state  our  mean-field type control (MFC) problem  as
follows.

\medskip
\noindent{\it Problem (MFC)}: Find $u(\cdot)\in {\mathscr{U}}_{ad}$
(if it exists) such that
\begin{eqnarray*}{\label{Jmin}} J(u(\cdot))=\min_{v(\cdot)\in
{\mathscr{U}}_{ad}}J(v(\cdot))\,.
\end{eqnarray*}
Our objective in this paper is to establish a maximum principle for
the optimal control to satisfy. This will be given in Section 4.
However, we shall pay a particular attention to the case when  the
state equation does not contain the mean field, namely, when the
state equation is given by the following equation
\begin{eqnarray}
&&\left\{
\begin{array}{lll}
    dx^v(t)\!\!\!\!\!\!&= b(t,x^v(t),v(t))dt+\sigma(t,x^v(t),v(t))dW_1(t)\\
& \quad+\displaystyle\int_{\R_0}\gamma(t,x^v(t-),v(t),z)\tilde{N}(dt,dz),\\
x^v(0)\!\!\!\!\!\!&=x_0,  \quad t\in[0,T]\,.   \\
\end{array}%
\right. \label{A}
\end{eqnarray}
In this case we can use Malliavin calculus to obtain more explicit
form of the maximum principle. This is done in Section 3.
It seems that this paper is the first
to study the problem of minimizing \eref{performance}
subject to  state constraint  \eref{Anew} and observation
constraint  \eref{observation}.   When full information is available,
the maximum principle is obtained in    \cite{HA},   \cite{Yang}  and references therein.

The work closely related to ours  is the work \cite{MBZ},  where the authors
have already used    Malliavin calculus to obtain the maximum principle.
The difference is that their partial information flow is   given  by a
fixed sub $\sigma$-algebra,  independent of the control. More precisely,
they assume $Y_t\in {\mathscr {E}}_t$, where
$ {\mathscr {E}}_t= {\mathscr {F}}_{(t-\delta)^+}$ for some fixed $\delta>0$.
In our model,  we assume the control process $u_t\in {\mathscr{F}}_t^Y,$ where $Y$
depends on $u$.  So ${\mathscr{F}}^Y$ also depends on the control $u$.

The rest of this paper is organized as follows. Section 2 gives a brief review of Malliavin calculus for L\'evy processes. In
Section 3, a maximum principle for mean-field type optimal control is derived using Malliavin calculus. We also give the applications in finance. In Section 4,
 we obtain the stochastic maximum principle for jump-diffusion mean-field SDEs by convex variation with an example about linear-quadratic optimal control.



\section{A brief review of Malliavin calculus for L\'evy processes}

In this section, we recall the basic definitions and properties of Malliavin calculus for Brownian motion $W(\cdot)$ and $N(ds,dz)$ related to this paper, for reader's convenience.

Let $L^2(\lambda^n)$ be the space of deterministic real functions $f$ such that
$$\|f\|_{L^2(\lambda^n)}=\left(\int_{[0,T]^n}f^2(t_1,t_2,\cdots,t_n)dt_1dt_2\cdots dt_n\right)^{1/2}<\infty,$$
where $\lambda(dt)$ denotes the Lebesgue measure on $[0,T].$

Let $L^2((\lambda\times\mu)^n)$ be the space of deterministic real functions $f$ such that
$$\|f\|_{L^2((\lambda\times\mu)^n)}=\left(\int_{([0,T]\times {\R_0})^n}f^2(t_1,z_1,t_2,z_2,\cdots,t_n,z_n)dt_1\mu(dz_1)dt_2\mu(dz_2)\cdots dt_n\mu(dz_n)\right)^{1/2}<\infty.$$
$L^2(\lambda\times \P)$ can be similarly denoted.

A general reference for this presentation is \cite{BNLOP}, \cite{NMOP} and \cite{N}. See also the book \cite{NOP}.

\subsection{Malliavin calculus for $W(\cdot)$}
A natural starting point is the Wiener-It\^o chaos expansion theorem, which states that any $F\in L^2(\mathscr{F}_T, \P)$
(where in this case $\mathscr{F}_t=\mathscr{F}_t^{W}$ is the $\sigma$-algebra generated by $W(s); 0\leq s\leq t$) can be written as
\begin{eqnarray}\label{F}F=\sum\limits_{n=0}^\infty I_n(f_n),
\end{eqnarray}
for a unique sequence of symmetric deterministic functions $f_n\in L^2(\lambda^n)$, where $\lambda$ is a Lebesgue measure on $[0,T]$ and
\begin{eqnarray*}I_n(f_n)=n!\int_0^T\int_0^{t_n}\cdots\int_0^{t_2}f_n(t_1,\cdots,t_n)dW(t_1)\cdots dW(t_n)
\end{eqnarray*}
(the $n$-times iterated integral of $f_n$ with respect to $W(\cdot)$) for $n=1,2,\cdots$ and $I_0(f_0)=f_0$ when $f_0$ is a constant.

Moreover, we have the isometry
\begin{eqnarray*}\E[F^2]=\|F\|^2_{L^2(\P)}=\sum\limits_{n=0}^{\infty}n!\|f_n\|^2_{L^2(\lambda^n)}.
\end{eqnarray*}
\begin{Definition} (Malliavin derivative $D_t$). Let ${\cal D}_{1,2}^{(W)}$ be the space of all $F\in L^2(\mathscr{F}_T,\P)$ such that its chaos expansion
(\ref{F}) satisfies
\begin{eqnarray*}\|F\|^2_{{\cal D}_{1,2}^{(W)}}:=\sum\limits_{n=1}^{\infty}nn!\|f_n\|^2_{L^2(\lambda^n)}<\infty.
\end{eqnarray*}
For $F\in {\cal D}_{1,2}^{(W)}$ and $t\in [0,T]$, we define the {\it Malliavin derivative} of $F$ at $t$ (with respect to $W(\cdot)$), $D_tF$, by
\begin{eqnarray*}D_tF=\sum\limits_{n=1}^\infty nI_{n-1}(f_n(\cdot,t)),
\end{eqnarray*}
where the notation $I_{n-1}(f_n(\cdot,t))$ means that we apply the $(n-1)$-times iterated integral to the first $n-1$ variables $t_1,\cdots, t_{n-1}$
of $f_n(t_1,t_2,\cdots,t_n)$ and keep the last variable $t_n=t$ as a parameter.\end{Definition}


\vspace{0.2cm}
Some basic properties of the Malliavin derivative $D_t$ are the following:

(i) Chain rule (\cite{N}, page 29)

Suppose $F_1,\cdots, F_m\in {\cal D}_{1,2}^{(W)}$ and that $\psi:\R^m\rightarrow \R$ is $C^1$ with bounded partial derivatives. Then

 $\psi(F_1,\cdots,F_m)\in {\cal D}_{1,2}^{(W)}$ and
\begin{eqnarray}\label{Chain rule}D_t\psi(F_1,\cdots,F_m)=\sum\limits_{i=1}^m\frac{\partial \psi}{\partial x_i}(F_1,\cdots,F_m)D_tF_i.
\end{eqnarray}

(ii) Integration by parts/duality formula (\cite{N}, page 35)

Suppose $h(t)$ is $\mathscr{F}_t$-adapted with $\E[\int_0^Th^2(t)dt]<\infty$ and let $F\in {\cal D}_{1,2}^{(W)}.$ Then
\begin{eqnarray}\label{part}\E\left[F\int_0^Th(t)dW(t)\right]=\E\left[\int_0^Th(t)D_t Fdt\right].
\end{eqnarray}

\subsection{Malliavin calculus for ${N}(\cdot)$}
The construction of a stochastic derivative/Malliavin derivative in the pure jump martingale case follows the same lines as in the Brownian motion case.
In this case, the corresponding Wiener-It\^o chaos expansion theorem states that any $F\in L^2(\mathscr{F}_T,\P)$ (where in this case $\mathscr{F}_t=\mathscr{F}_t^{{N}}$ is the $\sigma$-algebra generated by $\int_0^s\int_{A}{N}(dr,dz); 0\leq s\leq t$, $A\in \mathscr{B}(\R_0) $) can be written as
\begin{eqnarray}\label{FN} F=\sum\limits_{n=0}^{\infty}I_n(f_n); \ \ f_n\in \hat{L}^2((\lambda\times\mu)^n),
\end{eqnarray}
where $\mathscr{B}(\R_0)$ is the Borel $\sigma$-field generated by the open subset $O$ of $\R_0,$ whose closure $\bar{O}$ does not contain the point 0, and $\hat{L}^2((\lambda\times\mu)^n)$ is the space of functions $f_n(t_1,z_1,\cdots,t_n,z_n); \ t_i\in [0,T],$ $z_i\in \R_0$ such that
$f_n\in L^2((\lambda\times\mu)^n)$ and $f_n$ is symmetric with respect to the pairs of variables $(t_1,z_1),\cdots, (t_n, z_n).$

It is important to note that in this case the $n$-times iterated integral $I_n(f_n)$ is taken with respect to $\tilde{N}(dt,dz)$. Thus, we define
\begin{eqnarray*} I_n(f_n)=n!\int_0^T\int_{\R_0}\int_0^{t_n}\int_{\R_0}\cdots\int_0^{t_2}\int_{\R_0}f_n(t_1,z_1,\cdots,t_n,z_n)\tilde{N}(dt_1,dz_1)
\cdots\tilde{N}(dt_n,dz_n),
\end{eqnarray*}
for $\ f_n\in L^2((\lambda\times\mu)^n).$

Then It\^o isometry for stochastic integrals with respect to $\tilde{N}(dt,dz)$ gives the following isometry for the chaos expansion:
\begin{eqnarray*} \|F\|^2_{L^2(\P)}=\sum\limits_{n=0}^\infty n!\|f_n\|^2_{ L^2((\lambda\times\mu)^n)}.
\end{eqnarray*}
As in the Brownian motion case, we use the chaos expansion to define the Malliavin derivative. Note that in this case there are two parameters $t,z,$ where $t$
represents time and $z\neq 0$ represents a generic jump size.
\begin{Definition}(Malliavin derivative $D_{t,z}$) (\cite{BNLOP}, \cite{NMOP}) Let ${\cal D}_{1,2}^{(\tilde{N})}$ be the space of all $F\in L^2(\mathscr{F}_T,\P)$ such that its chaos expansion (\ref{FN}) satisfies
\begin{eqnarray*} \|F\|^2_{{\cal D}_{1,2}^{(\tilde{N})}}=\sum\limits_{n=1}^\infty n n!\|f_n\|^2_{ L^2((\lambda\times\mu)^n)}<\infty.
\end{eqnarray*}
For $F\in {\cal D}_{1,2}^{(\tilde{N})},$ we define the Malliavin derivative of $F$ at $(t,z)$ (with respect to $N(\cdot))$, $D_{t,z}F,$ by
\begin{eqnarray*} D_{t,z}F=\sum\limits_{n=1}^\infty nI_{n-1}(f_n(\cdot,t,z)),
\end{eqnarray*}
where $I_{n-1}(f_n(\cdot,t,z))$ means that we perform the $(n-1)$-times iterated integral with respect to $\tilde{N}$ to the first $n-1$ variable pairs $(t_1,z_1),\cdots, (t_n,z_n)$, keeping $(t_n,z_n)=(t,z)$ as a parameter.
\end{Definition}


The properties of $D_{t,z}$ corresponding to the properties (\ref{Chain rule}) and (\ref{part}) of $D_t$ are the following:

(i) Chain rule (\cite{NMOP}, \cite{I})

Suppose $F_1,\cdots, F_m\in {\cal D}_{1,2}^{(\tilde{N})}$ and that $\varphi:\R^m\rightarrow \R$ is continuous and  bounded. Then

 $\varphi(F_1,\cdots,F_m)\in {\cal D}_{1,2}^{(\tilde{N})}$ and
\begin{eqnarray}\label{Chain rule2}D_{t,z}\varphi(F_1,\cdots,F_m)=\varphi(F_1+D_{t,z}F_1,\cdots,F_m+D_{t,z}F_m)-\varphi(F_1,\cdots,F_m).
\end{eqnarray}

(ii) Integration by parts/duality formula (\cite{NMOP})

Suppose $\Psi(t,z)$ is $\mathscr{F}_t$-adapted and $\E[\int_0^T\int_{\R_0}\Psi^2(t,z)\mu(dz)dt]<\infty$ and let $F\in {\cal D}_{1,2}^{(\tilde{N})}.$ Then
\begin{eqnarray}\label{part2} \E\left[F\int_0^T\int_{\R_0}\Psi(t,z)\tilde{N}(dt,dz)\right]=\E\left[\int_0^T\int_{\R_0}\Psi(t,z)D_{t,z}F\mu(dz)dt\right].
\end{eqnarray}
\section{Stochastic maximum principle for mean-field type optimal control- Malliavin calculus approach}
In this  section,  we derive  the maximum principle for the mean
field optimal control problem  of minimizing (\ref{performance})
over $v(\cdot)\in {\mathscr{U}}_{ad}$ subject to (\ref{A}) and (\ref{observation}).

%
%
%
%
%
%

We make some assumptions on the   coefficients  $b\,, \sigma :
[0,T]\times\R\times U\times\Omega\rightarrow\R $
  and
$\gamma: [0,T]\times\R\times U\times
{\R_0}\times\Omega\rightarrow\R$:


\medskip
\begin{description}
\item{\bf  (A1)}\  The functions $b$, $\sigma$ and   $\gamma$ are
almost surely continuous
 with respect to their variables $t$, $x$, $v$.
 For any $t\in [0,T]$, the functions $b$ and $\sigma$ are continuously differentiable with respect to
 $x$ and $v$ with uniformly  bounded
 derivatives $b_x$, $b_v$, $\sigma_x$ and $\sigma_v$.
 \begin{equation}
 \sup_{0\le t\le T, x\in \R, v\in U\,,  \omega\in \Omega}\left[
 |b_x(t,x,v, \omega)|+|b_v(t,x,v, \omega)|+|\sigma _x(t,x,v, \omega)|+|\sigma _v(t,x,v,
 \omega)|\right]<\infty\,.
  \end{equation}
The function $\gamma$ is continuously differentiable in $(x,v)$ and
there is a constant $C$ such that
\[
\sup_{0\le t\le T,    \omega\in \Omega} \left( \int_{\R_0}|\gamma(t,x,v,z, \omega)|^2\mu(dz)\right)^{\frac{1}{2}} \le
C(1+|x|+|v|)\,.
\]
Moreover, we assume that $\displaystyle
\int_{\R_0}|\gamma_x(t,x,v,z)|^2\mu(dz)$ and  $\displaystyle
\int_{\R_0}|\gamma_v(t,x,v,z)|^2\mu(dz)$ are  continuous with
respect to $(x,v)$ and  uniformly bounded for $0\le t\le T, x\in \R,
v\in U$.
\item{\bf  (A2)}\  The function $h(t,x)$ is almost surely
continuous on $t\in[0, T]$ and $x\in \R$.   For any $t\in [0,T] $,
the function $h: [0,T]\times \R\times\Omega\rightarrow\R$ is
continuously differentiable with respect to $x$ and
\[
\sup_{t\in [0, T], x\in \R\,, \omega\in \Omega}\left[
|h(t,x)|+h_x(t, x)|\right]<\infty\,.
\]
For any $x$,
 $  h(t,x,\omega)$ is an $\mathscr{F}_t$-adapted process.%
 \end{description}

%
%
 The state process $(x^v(t)\,, 0\le t\le T)$ is not observable itself, but is observed partially and
 corrupted with noise
 so that we have  $(Y(t)\,, 0\le t\le T)$ defined by \eref{observation} available.
 Our control will be based on the observation of the process $Y$ up time instant
 $t$.

 The intrinsic difficulty arising from the fact that the control $v$ depends on the observation $Y$, which itself is dependent on
  the control $v$. The approach via Girsanov transformation  offers a way to overcome this difficulty.  Let
\begin{eqnarray}\label{rho}
\rho_0^v(t) 
=\exp\left\{-\int_0^t{h(s,x (s))}dW_2(s)-\frac{1}{2}\int_0^t{h^2(s,x
(s))}ds\right\}\,.
\end{eqnarray}
Define
\begin{equation}
\frac{d\P^v}{d\P_0}=\rho_0^v(T)  =\exp\left\{-\int_0^T {h(s,x
(s))}dW_2(s)-\frac{1}{2}\int_0^T {h^2(s,x (s))}ds\right\}\,.
\end{equation}
Then from the Girsanov theorem and the Kallinapur-Striebel formula,
we know that under the probability measure $\P^v$,  $(Y(t), 0\le
t\le T)$ is a Brownian motion, independent of $W_1$ and $N$. From
now on we shall use this probability measure $\P^v$.
Now we denote by $\P$ the probability measure, under which
$(W_1(t), Y(t), 0\le
t\le T)$ is a two dimensional Brownian motion  and $N$ is a Poisson random measure
independent of $W_1$ and $Y$.
The original
probability measure can be represented as
\begin{eqnarray}
\frac {d\P_0}{d\P} &=&\frac{1}{\rho_0^v(T)}
=\exp\left\{\int_0^T{h(s,x (s))}dW_2(s)+\frac{1}{2}\int_0^T{h^2(s,x
(s))}ds\right\}=\rho^v(T)\,,
\end{eqnarray}
where
\begin{eqnarray}
\rho^v(t) &=&   \exp\left\{\int_0^t{h(s,x
(s))}dY(s)-\frac{1}{2}\int_0^t{h^2(s,x (s))}ds\right\} \,.
\end{eqnarray}
It is easy to see that
\begin{eqnarray}\label{rhosolution}\left\{%
\begin{array}{lll}
    d\rho^v(t)&=&{h(t,x^v(t))}\rho^v(t)dY(s),\\
\rho^v(0)&=&1.
\end{array}%
\right.
\end{eqnarray}
%
%
With the new probability measure $ \P $ and denoting the expectation
with respect to $\P $ by $\E $, the performance functional
(\ref{performance}) can be rewritten as
\begin{eqnarray}
J(v(\cdot))&=& \E \left[\rho^v (T) \int_0^T  l(t, x^v(t), \E [\rho^v (T)f(x^v(t))], v(t) )dt\right. \nonumber\\
&&+\rho^v(T)\phi(x^v(T),\E [\rho^v(T)g(x^v(T))] )\Bigr]\nonumber\\
&=& \E \left[\int_0^T \rho^v(t)l(t, x^v(t), \E [\rho^v(t)f(x^v(t))], v(t) )dt\right. \nonumber\\
&&+\rho^v(T)\phi(x^v(T),\E [\rho^v(T)g(x^v(T))]
)\Bigr]\label{performanceP}
\end{eqnarray}
by the martingale property of $\rho^v(t)$.   Therefore,  the
problem (MFC) is equivalent to minimizing (\ref{performanceP}) over
${\mathscr{U}}_{ad}$ subject to (\ref{A}),
where in the definition of ${\mathscr{U}}_{ad}$  the observation
$(Y(t), 0\le t\le T)$ is a Brownian motion, independent of
$W_1$ and $N$  and  $\rho^v(t)$ is given by  (\ref{rhosolution}).

Let $\D_{1,2}$ denote the set of all random variables which are
Malliavin differentiable with respect to all of $W_1(\cdot)$,
$Y(\cdot)$, and $\tilde{N}(\cdot,\cdot).$

Furthermore, let us introduce some notations.
\begin{eqnarray}\label{notation}
a_\theta(t)&=&a_\theta(t,x(t),u(t))  \quad  {\mbox{for }}a=b,\sigma
{\mbox{ and }} \theta=x,v.\nonumber\\ 
h_x(t)&=&h_x(t,x(t)),\  l(u(t))=l(t,x(t),\E_0 [f( x(t))],u(t))\nonumber\\
l_\theta(t)&=& l_\theta(t,x(t),\E_0  [f(x(t))],u(t))  \quad  {\mbox{for }}
\theta=x,y,v.\nonumber\\
\phi&=&\phi(x(T),\E_0 [ g(x(T))]),\quad  \phi_\theta=\phi_\theta(x(T),\E_0 [ g(x(T))]) \quad  {\mbox{for }} \theta=x,y.\nonumber\\
%
 {\mathscr L}^2_{\mathscr{F}}(0,T)
 &=&\Bigl\{\phi(t,\omega) \mbox{ is an
 $\R$-valued progressively measurable process such that} \nonumber\\
&&\qquad \E\Bigl(\int_0^T|\phi(t)|^2dt\Bigr)<\infty\Bigr\}.\nonumber\\
{\mathscr M}^2_{\mathscr{F}}(0,T;\R)
&=&\Bigl\{\phi(t,z,\omega) \mbox{ is an
 $\R$-valued progressively measurable process} \nonumber\\
&&\qquad \mbox{  such that }
\E\Bigl(\int_0^T\int_{\R_0}|\phi(t,z)|^2\mu(dz)dt\Bigr)<\infty\Bigr\}.\nonumber\\
\L_{12}(\R)&=&\Bigl\{ \hbox{$F(t,\omega)$ is an  $\R$-valued progressively measurable process such that}:\nonumber\\
&& \hbox{ for almost everywhere $0\leq t\leq T$, \ $F(t,\cdot)\in \D_{1,2}$ and
\ \  }\nonumber\\
&&\qquad\hbox{$\|F\|_{1,2}^2:=\E
\Bigl(\int_0^T|F(t,\omega)|^2dt$} \hbox{ $+\int_0^T\int_0^T|
D_s^{(Y)}F(t,\omega)|^2ds dt$} \nonumber\\
&&\qquad  \hbox{$ +\int_0^T\int_0^T\int_{\R_0}|
D_{s,z}F(t,\omega)|^2\mu(dz)ds dt\Bigr)<\infty  $\Big\} }.
\end{eqnarray}

Let $u(\cdot), v(\cdot)\in {\mathscr{U}}_{ad}$ be given and fixed
such that $u(\cdot)+v(\cdot)\in {\mathscr{U}}_{ad}.$   For any
$0\leq \varepsilon\leq 1,$  we take the variational control
$u^\varepsilon(\cdot)=u(\cdot)+\varepsilon v(\cdot).$  Because
${\mathscr{U}}_{ad}$ is convex, $u^\varepsilon(\cdot)$ belongs to
${\mathscr{U}}_{ad}$.  Denote by $x^\varepsilon(\cdot)$ and
$\rho^\varepsilon(\cdot)$ the solutions  of (\ref{A}) and
(\ref{rhosolution}) corresponding to  the control
$u^\varepsilon(\cdot).$  When $\vare=0$, denote $x=x(\cdot)$ and
$\rho=\rho(\cdot)$.   To obtain the maximum principle for the
optimal control problem of minimizing (\ref{performanceP})
 over $v(\cdot)\in {\mathscr{U}}_{ad}$,  we use
 $\displaystyle \frac{d}{d\vare}\big|_{\vare=0} J(v^\vare)=0$.
To compute $\frac{d}{d\vare}\big|_{\vare=0} J(v^\vare)$ we need to compute
$\frac{d}{d\vare}\big|_{\vare=0}x^\varepsilon $ and
$\frac{d}{d\vare}\big|_{\vare=0}\rho^\varepsilon $, which are given
by the following lemma \ref{Lemma2}.  First,  we need
\begin{Lemma}\label{lemma1} Under  assumptions (A1) and (A2), for any $v(\cdot)\in {\mathscr{U}}_{ad},$ there is a constant $C$ such that
\begin{eqnarray*}
\begin{array}{lll}
& \sup\limits_{0\leq t\leq T}\E[x^v(t)]^2\leq C(1+\sup\limits_{0\leq t\leq T}\E v^2(t)),\quad \sup\limits_{0\leq t\leq T}\E[\rho^v(t)]^2\leq C,\\
&\sup\limits_{0\leq t\leq T}\E|x^{\varepsilon }(t)-x(t)|^2\leq
C\varepsilon^2,\quad  \sup\limits_{0\leq t\leq
T}\E|\rho^{\varepsilon}(t)-\rho(t)|^2\leq C\varepsilon^2.
\end{array}
\end{eqnarray*}
\end{Lemma}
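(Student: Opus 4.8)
The plan is to establish the four estimates by standard a priori moment bounds for jump SDEs, in the order listed, since the earlier bounds feed into the later ones. Throughout I would work under the measure $\P$, under which $Y$ is a Brownian motion independent of $W_1$ and $N$, and I would use the linear growth of $b,\sigma$ coming from the bounded derivatives in (A1), the explicit growth bound on $(\int_{\R_0}|\gamma|^2\mu(dz))^{1/2}$ in (A1), and the global boundedness of $h$ and $h_x$ in (A2); let $M$ denote a common bound for $|h|$ and $|h_x|$. To make every Gronwall step rigorous I would first localize by the stopping times $\tau_n=\inf\{t:|x^v(t)|\ge n\}$, derive the estimates with constants independent of $n$, and then pass to the limit by Fatou; I suppress this localization below. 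Note that each argument produces a bound on $\E[\,\cdot\,]$ that is uniform in $t\in[0,T]$, which is exactly what the stated suprema require.

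For the first bound I would put (\ref{A}) in integral form, square, take $\E$, and split into four terms via $(a+b+c+d)^2\le 4(a^2+b^2+c^2+d^2)$. Cauchy--Schwarz controls the drift, the It\^o isometry the $dW_1$ integral, and the isometry $\E[(\int_0^t\int_{\R_0}\gamma\,\tilde{N})^2]=\E[\int_0^t\int_{\R_0}|\gamma|^2\mu(dz)\,ds]$ the jump integral; linear growth then gives $\E|x^v(t)|^2\le C(1+\sup_s\E v^2(s))+C\int_0^t\E|x^v(s)|^2\,ds$, and Gronwall closes it. The second bound follows from the linear SDE (\ref{rhosolution}): It\^o applied to $(\rho^v)^2$ kills the $dY$-martingale in expectation and leaves $\frac{d}{dt}\E(\rho^v(t))^2=\E[h^2(t,x^v(t))(\rho^v(t))^2]\le M^2\,\E(\rho^v(t))^2$, so $\E(\rho^v(t))^2\le e^{M^2T}$. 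The identical computation at any even power shows all moments of $\rho^v$ (and of $\rho$) are bounded uniformly on $[0,T]$, a fact I will need below.

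For the third bound set $\xi^\varepsilon=x^\varepsilon-x$ and subtract the two copies of (\ref{A}). In each coefficient I insert an intermediate point, e.g. $b(s,x^\varepsilon,u^\varepsilon)-b(s,x,u)=[b(s,x^\varepsilon,u^\varepsilon)-b(s,x,u^\varepsilon)]+[b(s,x,u^\varepsilon)-b(s,x,u)]$; the mean value theorem with the bounded derivatives $b_x,b_v$ bounds the first bracket by $C|\xi^\varepsilon|$ and the second by $C\varepsilon|v|$ (since $u^\varepsilon-u=\varepsilon v$), and likewise for $\sigma$ and $\gamma$. The same squaring-and-isometry scheme then gives $\E|\xi^\varepsilon(t)|^2\le C\int_0^t\E|\xi^\varepsilon(s)|^2\,ds+C\varepsilon^2\sup_s\E v^2(s)$, whence Gronwall yields $\E|\xi^\varepsilon(t)|^2\le C\varepsilon^2$. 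I would run the identical argument at the fourth power to obtain in addition $\E|\xi^\varepsilon(t)|^4\le C\varepsilon^4$, which is what the last estimate requires.

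The fourth bound is where the only real subtlety appears. Writing $\eta^\varepsilon=\rho^\varepsilon-\rho$ and subtracting the two instances of (\ref{rhosolution}) gives $d\eta^\varepsilon=[\,h(t,x^\varepsilon)\eta^\varepsilon+(h(t,x^\varepsilon)-h(t,x))\rho\,]\,dY$. By the It\^o isometry and $(a+b)^2\le 2a^2+2b^2$, using $|h|\le M$ and $|h(s,x^\varepsilon)-h(s,x)|\le M|\xi^\varepsilon|$, one obtains $\E\eta^\varepsilon(t)^2\le C\int_0^t\E\eta^\varepsilon(s)^2\,ds+C\int_0^t\E[|\xi^\varepsilon(s)|^2\rho(s)^2]\,ds$. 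The cross term $\E[|\xi^\varepsilon|^2\rho^2]$ is precisely the main obstacle: it cannot be handled with the second-moment bounds alone. I resolve it by Cauchy--Schwarz, $\E[|\xi^\varepsilon|^2\rho^2]\le(\E|\xi^\varepsilon|^4)^{1/2}(\E\rho^4)^{1/2}\le C\varepsilon^2$, invoking the fourth-moment bound for $\xi^\varepsilon$ from the previous step and the uniform fourth moment of $\rho$ from the second step. A final Gronwall then gives $\E\eta^\varepsilon(t)^2\le C\varepsilon^2$, completing the proof.
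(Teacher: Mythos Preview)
Your proposal is correct and is exactly the standard route the paper has in mind: the paper's own proof is the single sentence ``This is a direct application of Burkholder--Davis--Gundy inequality,'' so your sketch simply makes explicit the moment/Gronwall argument that sentence gestures at. Your handling of the cross term $\E[|\xi^\varepsilon|^2\rho^2]$ via Cauchy--Schwarz and auxiliary fourth-moment bounds is the right way to close the last estimate and is a detail the paper omits entirely.
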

\begin{proof} This is a directly application of
Burkholder-Davis-Gundy inequality.
\end{proof}

Consider the following linear stochastic differential  equations
(which will be the equations satisfied by
$\frac{d}{d\vare}\big|_{\vare=0}x^\varepsilon $ and
$\frac{d}{d\vare}\big|_{\vare=0}\rho^\varepsilon $).
\begin{eqnarray}\label{x1}\left\{%
\begin{array}{lll}
    dx_1(t)&=& \Bigl(b_x(t)x_1(t)+b_v(t)v(t)\Bigr)dt+\Bigl(\sigma_x(t)x_1(t)+\sigma_v(t)v(t)\Bigr)dW_1(t)\\
    &&+\displaystyle\int_{\R_0}[\gamma_x(t,x(t-),u,z)x_1(t-)+\gamma_v(t,x(t-),u,z)v(t)]\tilde{N}(dt,dz),\\
x_1(0)&=&0
\end{array}%
\right.
\end{eqnarray}
and
\begin{eqnarray}\label{rhosolution1}\left\{%
\begin{array}{lll}
    d\rho_1(t)&=& \Bigl(\rho_1(t) h(t,x(t))+\rho(t)h_x(t)x_1(t)\Bigr)dY(t),\\
\rho_1(0)&=&0\,,
\end{array}%
\right.
\end{eqnarray}
where  the notations $b_x(t)$ and so on are defined in (\ref{notation})
of this  section. In view of the boundedness of $b_x$, $b_v$,
$\sigma_x$, $\sigma_v$, $\gamma_x$, $\gamma_v$, $h$ and $h_x$,
(\ref{x1}) and (\ref{rhosolution1}) admit   unique solutions  $x_1(\cdot)\,,\rho_1(\cdot)
\in {\mathscr L}^2_{\mathscr{F}}(0,T;\R) $(See also \cite{BBP} and \cite{TangL}). Obviously,
$$\rho_1(t)=\rho(t)\Bigl(\int_0^th_x(s)x_1(s)dY(s)-\int_0^th_x(s)h(s,x(s))x_1(s)ds\Bigr),\quad 0\leq t\leq T.$$

By  Lemma 1 and Lemma 2 in \cite{Xiao}, we have the following lemma, which
states that $\frac{d}{d\vare}\big|_{\vare=0}x^\varepsilon=x_1(t) $
and $\frac{d}{d\vare}\big|_{\vare=0}\rho^\varepsilon=\rho_1(t) $.
\begin{Lemma}\label{Lemma2} Let the assumptions (A1) and (A2) hold. Then
\begin{eqnarray*}
\begin{array}{lll}
& \lim\limits_{\varepsilon\rightarrow 0}\sup\limits_{0\leq t\leq
T}\E\left|\displaystyle
\frac{x^\varepsilon(t)-x(t)}{\varepsilon}-x_1(t)
 \right| ^2 =0,\quad \lim\limits_{\varepsilon\rightarrow
0}\sup\limits_{0\leq t\leq T}\E\left|\displaystyle
\frac{\rho^\varepsilon(t)-\rho(t)}{\varepsilon}-\rho_1(t) \right|
^2=0.
\end{array}
\end{eqnarray*}
\end{Lemma}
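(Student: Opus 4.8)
The plan is to treat each of the two convergences as an $L^2$-estimate for the error process governed by a \emph{linear} SDE, closed by the Burkholder--Davis--Gundy (BDG) inequality together with Gronwall's lemma. Since $\rho_1$ in \eref{rhosolution1} is driven by $x_1$ through the term $\rho(t)h_x(t)x_1(t)$, while the equation \eref{rhosolution} for $\rho^\varepsilon$ involves $x^\varepsilon$, the two limits are coupled. Accordingly I would first prove the statement for the state process $x$ and then feed that estimate into the argument for $\rho$.

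For the state process, set $\eta^\varepsilon(t)=\frac{x^\varepsilon(t)-x(t)}{\varepsilon}-x_1(t)$ and subtract \eref{x1} from the difference of \eref{A} taken at the controls $u^\varepsilon=u+\varepsilon v$ and $u$. Writing each coefficient increment by the fundamental theorem of calculus, e.g.
$$\frac{b(s,x^\varepsilon,u^\varepsilon)-b(s,x,u)}{\varepsilon}=\tilde b_x^\varepsilon(s)\,\frac{x^\varepsilon-x}{\varepsilon}+\tilde b_v^\varepsilon(s)\,v(s),\qquad \tilde b_x^\varepsilon(s)=\int_0^1 b_x\bigl(s,x+\lambda(x^\varepsilon-x),u+\lambda\varepsilon v\bigr)\,d\lambda,$$
and applying the same device to $\sigma$ and (under $\int_{\R_0}\cdots\mu(dz)$) to $\gamma$, I use the algebraic identity $\tilde b_x^\varepsilon\frac{x^\varepsilon-x}{\varepsilon}-b_x x_1=\tilde b_x^\varepsilon\eta^\varepsilon+(\tilde b_x^\varepsilon-b_x)x_1$ to split the SDE for $\eta^\varepsilon$ into a part with bounded coefficients multiplying $\eta^\varepsilon$ (the Gronwall part) and an error part
$$R^\varepsilon(s)=(\tilde b_x^\varepsilon-b_x)x_1+(\tilde b_v^\varepsilon-b_v)v+\bigl(\text{analogous $\sigma$ and $\gamma$ terms}\bigr).$$
Using the uniform bounds on $b_x,b_v,\sigma_x,\sigma_v$ and on $\int_{\R_0}|\gamma_x|^2\mu(dz)$, $\int_{\R_0}|\gamma_v|^2\mu(dz)$ from (A1), BDG and the It\^o isometry yield
$$\E\Bigl[\sup_{0\le r\le t}|\eta^\varepsilon(r)|^2\Bigr]\le C\int_0^t\E\Bigl[\sup_{0\le r\le s}|\eta^\varepsilon(r)|^2\Bigr]ds+C\,\E\int_0^T|R^\varepsilon(s)|^2\,ds,$$
so by Gronwall the claim reduces to showing $\E\int_0^T|R^\varepsilon(s)|^2\,ds\to0$.

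That vanishing is where the hypotheses enter. By Lemma~\ref{lemma1}, $x^\varepsilon\to x$ in $L^2$ uniformly in $t$, so $x+\lambda(x^\varepsilon-x)\to x$; the continuity of the derivatives in $(x,v)$ from (A1) gives $\tilde b_x^\varepsilon-b_x\to0$, and the uniform bounds provide a dominating function, so dominated convergence together with $x_1\in{\mathscr L}^2_{\mathscr{F}}(0,T)$ disposes of the Lebesgue and Brownian error terms. The hard part will be the jump error term: after the It\^o isometry I must show
$$\E\int_0^T\!\!\int_{\R_0}\bigl|\tilde\gamma_x^\varepsilon(s,z)-\gamma_x(s,z)\bigr|^2\,|x_1(s-)|^2\,\mu(dz)\,ds\longrightarrow0,$$
which requires upgrading the pointwise-in-$z$ continuity of $\gamma_x$ to $L^2(\mu)$-continuity in $(x,v)$; this I would obtain by a Vitali/dominated-convergence argument on $\R_0$, exploiting exactly the assumed continuity and uniform boundedness of the map $(x,v)\mapsto\int_{\R_0}|\gamma_x(t,x,v,z)|^2\mu(dz)$ in (A1) (convergence of the $L^2(\mu)$-norms plus pointwise convergence forces $L^2(\mu)$-convergence).

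Finally, for $\rho$, set $\zeta^\varepsilon(t)=\frac{\rho^\varepsilon(t)-\rho(t)}{\varepsilon}-\rho_1(t)$ and difference \eref{rhosolution} against \eref{rhosolution1}. Expanding $h(t,x^\varepsilon)\rho^\varepsilon-h(t,x)\rho$ by the same Taylor device in $x$ produces a linear equation for $\zeta^\varepsilon$ driven by $\zeta^\varepsilon$ itself (bounded coefficient, since $h,h_x$ are bounded by (A2)) plus error terms involving the already-controlled $\eta^\varepsilon$ and the differences $\tilde h_x^\varepsilon-h_x$. Because $\rho^\varepsilon$ is the stochastic exponential of the bounded integrand $h$, uniform moments $\sup_\varepsilon\sup_t\E|\rho^\varepsilon(t)|^p<\infty$ for any $p$ come essentially for free, so Hölder's inequality lets me pass products such as $(\tilde h_x^\varepsilon-h_x)\,\rho\,x_1$ and $h\,\eta^\varepsilon$ through dominated convergence; the order of the argument (state first, then $\rho$) is what makes the $h\,\eta^\varepsilon$ term controllable. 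A final BDG-plus-Gronwall estimate on $\zeta^\varepsilon$ then closes the proof.
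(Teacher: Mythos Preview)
Your proposal is correct and follows the standard route: derive a linear SDE for the error process, apply BDG/It\^o isometry to close an integral inequality, and finish with Gronwall once the residual terms are shown to vanish by dominated convergence. The paper itself does not give a self-contained proof of this lemma; it simply invokes Lemmas~1 and~2 of \cite{Xiao}. Those lemmas are proved by exactly the mechanism you describe, and the paper reproduces the same argument explicitly for the mean-field analogue (Lemma~\ref{Lemma2new}): it writes the SDE for $\eta(t)=\frac{\rho^\varepsilon(t)-\rho(t)}{\varepsilon}-\rho_1(t)$, bounds $\E\eta^2(t)$ by $K_0\int_0^t\E\eta^2(s)\,ds+o(\varepsilon)$, and applies Gronwall. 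So your outline is in full agreement with both the cited source and the paper's own method in the parallel setting.

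One small remark on the jump term: your Vitali/Riesz argument (pointwise convergence in $z$ together with convergence of $\int_{\R_0}|\gamma_x|^2\mu(dz)$ forcing $L^2(\mu)$-convergence) is the right idea, but note that to pass from $L^2(\mu)$-convergence at each fixed $(s,\omega)$ to convergence of the full expectation $\E\int_0^T\!\int_{\R_0}\cdots\,\mu(dz)\,ds$ you still need an outer dominated-convergence step; the uniform bound on $\int_{\R_0}|\gamma_x|^2\mu(dz)$ in (A1) together with $x_1\in\mathscr{L}^2_{\mathscr F}(0,T)$ supplies that dominating function, so the argument closes.
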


The following assumptions are  needed to obtain the  maximum
principle.

\begin{description}
\item{\bf (A3)}\  The functions  $l:[0,T]\times\R\times\R\times U\times\Omega\rightarrow\R$ and
$\phi:\R\times\R\times\Omega\rightarrow\R$ are almost surely continuously differentiable
 with respect to  $(t,x,y,v)\in [0,T]\times \R\times\R\times U$  and $(x,y)\in \R\times U$,
respectively, and satisfying
\begin{eqnarray*}&&\E \left[\int_0^T \rho^v(t)|l(t, x^v(t), \E [\rho^v(t)f(x^v(t))], v(t) )|dt\right. \nonumber\\
&&\quad\quad+\rho^v(T)|\phi(x^v(T),\E [\rho^v(T)g(x^v(T))])|\Bigr]<\infty.
\end{eqnarray*}

$\phi$ is almost surely twice continuously  differentiable with respect to $x$ with first and second order bounded derivatives.
$f:\R\to\R$ and $g:\R\to\R$ are both twice continuously  differentiable with first and second order bounded derivatives .
\item{\bf (A4)}\  (i) For any $t,$ $\tau$, such that $t+\tau\in [0,T]$, and
bounded $\mathscr{F}_t^Y$-measurable random variable $\beta$, we
formulate the control process $v(s)\in U,$ with
$$v(s)=\beta I_{[t,t+\tau]}(s),\quad s\in[0,T],$$
where $I_{[t,t+\tau]}(s)$ is the indicator function on the set
$[t,t+\tau].$

(ii) For any $v(s) \in \mathscr{F}_s^Y$ with $v(s)$ bounded, $s\in
[0,T],$ there is an $\delta >0$ such that $u(\cdot)+\varepsilon
v(\cdot)\in \mathscr{U}_{ad}$ for $\varepsilon\in (-\delta,\delta).$
\end{description}

To describe the maximum principle  we define the following  adjoint
processes $q(\cdot)$, $k(\cdot)$ and $r(\cdot,\cdot)$ as follows. Let
\begin{eqnarray}
G(t,s)&=& {\rm
exp}\left(\int_t^s\Bigl[b_x(r)-\frac{1}{2}\sigma_x^2(r)\Bigr]dr
+\int_t^s \sigma_x(r)dW_1(r)\right.\nonumber\\
&&+\int_t^s\int_{\R_0}{\rm ln}\left(1+\gamma_x(r,x(r),u,z)\right)\tilde{N}(dr,dz)       \nonumber\\
&&+\left. \int_t^s\int_{\R_0}\Bigl[{\rm
ln}\left(1+\gamma_x(r,x(r),u,z)\right)-\gamma_x(r,x(r),u,z)\Bigr]\mu(dz)dr
\right); \ s>t\,;\label{G}\nonumber\\
\Sigma(t)&=&\rho(T)\Bigl(\phi_x+g^\prime(x(T))\E_0 [\phi_y]\Bigr)\nonumber\\
&&+\int_t^T \rho(s)\Bigl(l_x(s)+f^\prime(x(s))\E_0[l_y(s)]\Bigr)ds\,;\nonumber\\
\Pi(t)&=&\rho(T)\Bigl(\phi+g(x(T))\E_0 [\phi_y]\Bigr)\nonumber\\
&&+\int_t^T
\rho(s)\Bigl(l(u(s))+f(x(s))\E_0 [l_y(s)]\Bigr)ds\,; \label{e.Pi} \\
\Lambda(t)&=&\rho(T)\Bigl(\phi+g(x(T))\E_0[\phi_y]\Bigr)h(t,x(t))\nonumber\\
&&+\int_t^T\rho(s)\Bigl(l(u(s))+f(x(s))\E_0[l_y(s)]\Bigr)h(t,x(t))ds;\label{Lambda}\\
H_x(t)&=&\Sigma(t)b_x(t)+\sigma_x(t) D_t^{({W_1})}\Sigma(t)+{h_x(t)}\Bigl(D_t^{({Y})}\Pi(t)-\Lambda(t)\Bigr)\nonumber\\
&&+\int_{\R_0}\gamma_x(t,x(t-),u,z)D_{t,z}\Sigma(t)\mu(dz);\nonumber\\
\Theta(t,s)&=&H_x(s)G(t,s)\,.\nonumber
\end{eqnarray}
Finally we denote
\begin{eqnarray}
q(t)&:=&\Sigma(t)+\int_t^T \Theta(t,s) ds,\nonumber\\
k(t)&:=& D_t^{(W_1)} q(t),\nonumber\\
{r}(t,z)&:=&  D_{t,z}q(t).\label{pt}
\end{eqnarray}

Now we state our main theorem of this section.
\begin{theorem} Let the assumptions (A1), (A2), (A3) and (A4) hold. Assume that $u(\cdot)$
is a local minimum for $J(v(\cdot))$, in the sense that for all
bounded $v(\cdot)\in \mathscr{U}_{ad},$ there exists a  $\delta >0$
such that $u(\cdot)+\varepsilon v(\cdot)\in \mathscr{U}_{ad}$ for
any $\varepsilon\in (-\delta,\delta)$ and
$${\cal J}(\varepsilon)=J(u(\cdot)+\varepsilon v(\cdot)),\quad \varepsilon\in (-\delta,\delta),$$
attains its minimum at $\varepsilon=0.$  Assume that
$\rho(t)$, $l(u(t)),$ $l_x(t)$ and $\Theta(t,s)$ are in
$\mathbb{L}_{1,2}(\R)$ for all $0\leq t\leq s\leq T.$
   Then we have
$$\E \left[H_v(t,x(t),\E_0 [f(x(t))],u(t); q(t),k(t),r(t,\cdot))\left|\right. \mathscr{F}_t^Y\right]=0,$$
where $H_v$ is defined by
\begin{eqnarray*}&&H_v(t,x,y,v; q,k, {r})=b_v(t,x,v)q+\sigma_v(t,x,v)k\\
&&\quad\quad+\int_{\R_0}r(t,z)\gamma_v(t,x,v,z)\mu(dz)+\rho(t)l_v(t,x,y,v).
 \end{eqnarray*}
 \end{theorem}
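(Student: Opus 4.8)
The plan is to start from the first-order optimality condition $\mathcal J'(0)=0$ and to convert the resulting first variation into the stated adjoint form by repeated use of the two duality formulas (\ref{part}) and (\ref{part2}), together with the variation-of-constants representation of $x_1$ through the fundamental solution $G(t,s)$. Concretely, I would first compute $\mathcal J'(0)=\frac{d}{d\varepsilon}\big|_{\varepsilon=0}J(u(\cdot)+\varepsilon v(\cdot))$ from (\ref{performanceP}). Differentiating under the expectation---justified by Lemma \ref{lemma1}, Lemma \ref{Lemma2}, the boundedness in (A1)--(A3) and dominated convergence---and using the chain rule (\ref{Chain rule}), one obtains a sum of terms linear in $x_1(t)$, $\rho_1(t)$ and $v(t)$, with $x_1,\rho_1$ solving (\ref{x1}) and (\ref{rhosolution1}); the mean-field part enters through $\frac{d}{d\varepsilon}\big|_{0}\E[\rho^\varepsilon(t)f(x^\varepsilon(t))]=\E[\rho_1(t)f(x(t))+\rho(t)f'(x(t))x_1(t)]$ and the analogous terminal term in $g$.

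Next I would absorb the mean-field coupling. Since $\E_0[l_y(t)]$ and $\E_0[\phi_y]$ are deterministic, Fubini together with the density identity $\E[\rho(t)\xi]=\E_0[\xi]$ valid for $\mathscr{F}_t$-measurable $\xi$ (the martingale property of $\rho$) lets me rewrite these terms as effective coefficients. This yields precisely the combined $x$-coefficients $l_x+f'\E_0[l_y]$ and $\phi_x+g'\E_0[\phi_y]$, and the combined $\rho_1$-coefficients $l(u)+f\E_0[l_y]$ and $\phi+g\E_0[\phi_y]$, which appear in the definitions (\ref{e.Pi}) of $\Sigma$ and $\Pi$. After this step the variation is an expectation of $\rho_1(T)$, $x_1(T)$ and two time integrals against these coefficients, plus the running term $\rho(t)l_v(t)v(t)$.

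The heart of the proof is to eliminate $\rho_1$ and $x_1$ in favour of $v$. For $\rho_1$ I would use its dynamics (\ref{rhosolution1}), a stochastic integral against the $\P$-Brownian motion $Y$, and apply the $Y$-duality (\ref{part}); the two drivers $\rho_1 h$ and $\rho h_x x_1$ then produce the contribution $h_x(t)\big(D_t^{(Y)}\Pi(t)-\Lambda(t)\big)$ to $H_x$, with $\Lambda(t)=h(t,x(t))\Pi(t)$ read off from (\ref{Lambda}). For $x_1$ I would substitute its variation-of-constants representation in terms of $G(t,s)$ and the $v$-forcings $b_v v$, $\sigma_v v$, $\gamma_v v$, interchange the order of integration, and apply (\ref{part}) and (\ref{part2}) to transfer $D^{(W_1)}$ and $D_{\cdot,z}$ onto $\Sigma$; this generates the remaining terms $\Sigma b_x$, $\sigma_x D_t^{(W_1)}\Sigma$ and $\int_{\R_0}\gamma_x D_{t,z}\Sigma\,\mu(dz)$ of $H_x$. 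Routing $H_x$ through $\Theta(t,s)=H_x(s)G(t,s)$ and $q(t)=\Sigma(t)+\int_t^T\Theta(t,s)\,ds$, and noting that the $dt$-, $dW_1$- and $\tilde N$-forcings pair respectively with $q(t)$, with $k(t)=D_t^{(W_1)}q(t)$ via (\ref{part}), and with $r(t,z)=D_{t,z}q(t)$ via (\ref{part2}), the entire variation collapses to $\E\!\left[\int_0^T H_v(t,\cdots;q(t),k(t),r(t,\cdot))\,v(t)\,dt\right]=0$. Taking finally $v(s)=\beta I_{[t,t+\tau]}(s)$ as in (A4) with $\beta$ bounded and $\mathscr{F}_t^Y$-measurable, pulling out $\E[\,\cdot\mid\mathscr{F}_t^Y]$ by the tower property, and letting $\tau\downarrow0$ (Lebesgue differentiation) yields $\E[H_v(t,\cdots)\mid\mathscr{F}_t^Y]=0$ for a.e.\ $t$.

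The step I expect to be the main obstacle is this $x_1$-elimination: making the variation-of-constants formula for the jump-driven linear SDE (\ref{x1}) rigorous and justifying the interchange of the anticipating Malliavin duality with the time and jump integrations. Because the jump Malliavin derivative (\ref{Chain rule2}) is a difference operator rather than a genuine derivative, computing $D_{\cdot,z}$ of the products $G(s,t)\Sigma(s)$ and of the $\ln(1+\gamma_x)$ structure inside $G$ requires care; this is exactly where the hypotheses that $\rho$, $l(u)$, $l_x$ and $\Theta$ lie in $\mathbb{L}_{1,2}(\R)$ are needed, to ensure $q,k,r$ are well defined and that all the duality formulas apply.
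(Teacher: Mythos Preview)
Your plan is correct in spirit and uses the same ingredients as the paper---the first variation, the three duality formulas, the fundamental solution $G(t,s)$, and the specialization $v=\beta I_{(t,t+\tau]}$---but you invoke them in a different order, and the paper's ordering is what dissolves the obstacle you flag.

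You propose to write $x_1$ by an \emph{inhomogeneous} variation-of-constants formula for general $v$, apply duality to collapse everything to $\E\int_0^T H_v\,v\,dt=0$, and only then specialize. The paper instead first expands $x_1(T)=\int_0^T dx_1$ (and Fubini for the running terms) and applies duality once, with no variation of constants, to reach (\ref{Jvarpsilon}); this still contains $\E\int_0^T H_x(s)x_1(s)\,ds$. It \emph{then} specializes to $v(s)=\beta I_{(t,t+\tau]}(s)$, so that $x_1\equiv 0$ on $[0,t]$ and, on $[t+\tau,T]$, $x_1$ solves the \emph{homogeneous} equation, giving the clean identity $x_1(s)=x_1(t+\tau)\,G(t+\tau,s)$. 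Differentiating the resulting $\mathcal J_1(\tau)+\mathcal J_2(\tau)=0$ at $\tau=0$ and applying duality once more produces the $\Theta$-contribution to $q$, $k$, $r$; the piece $\mathcal J_{11}$ coming from the homogeneous increment of $x_1(t+\tau)$ vanishes because $x_1(t)=0$.

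The gain is that the paper never needs the inhomogeneous Duhamel formula with stochastic forcings in the $dW_1$- and $\tilde N$-parts, where covariation/jump corrections to the naive expression $x_1(s)=\int_0^s G(r,s)\,[\text{forcings}]$ appear---exactly the difficulty you anticipate. Your route can be carried through, but you would have to track those correction terms and verify they cancel in the final pairing; the paper's $\tau$-differentiation device bypasses this entirely, since at $\tau=0$ only the infinitesimal forcing at the single time $t$ survives.
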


\begin{proof}  If $u(\cdot)$ is a local minimum for $J(v(\cdot)),$
then $ \frac{d}{d\varepsilon}{\cal
J}(\varepsilon)|_{\varepsilon=0}=0$.
Since \begin{eqnarray*}
\frac{d}{d\varepsilon}{\cal J}(\varepsilon)|_{\varepsilon=0}
&=& \lim\limits_{\varepsilon\rightarrow 0}
\frac{J(u(\cdot)+\varepsilon v(\cdot))-J(u(\cdot))}{\varepsilon} \nonumber\\
&=& \lim\limits_{\varepsilon\rightarrow 0} \frac{1}{\varepsilon} \E\left\{\int_0^T
\Bigl[\left(\rho^{u+\varepsilon v}(t)-\rho(t)\right)l(t,x(t),\E[\rho(t)f(x(t))],u)
\right.\nonumber\\
&&+ \rho^{u+\varepsilon v}(t)\Bigl(l(t,x^{u+\varepsilon v}(t),\E[\rho^{u+\varepsilon v}
(t)f(x^{u+\varepsilon v}(t))],u+\varepsilon v )\nonumber\\
&&-l(t,x(t),\E[\rho(t)f(x(t))],u)\Bigr)\Bigr]dt\nonumber\\
&&+\left(\rho^{u+\varepsilon v}(T)-\rho(T)\right)\phi(x(T),\E[\rho(T)g(x(T))])\nonumber\\
&&\Bigl.+\rho^{u+\varepsilon v}(T)\Bigl(\phi(x^{u+\varepsilon v}(T),\E[\rho^{u+\varepsilon
v}(T)g(x^{u+\varepsilon v}(T))])\nonumber\\
&&-\phi(x(T),\E[\rho(T)g(x(T))])\Bigr)\Bigr\},\nonumber\\
\end{eqnarray*}
it follows from  Lemma 3.2 that
\begin{eqnarray*}
\frac{d}{d\varepsilon}{\cal J}(\varepsilon)|_{\varepsilon=0}
&=&\E  \Bigl[\int_0^T\rho_1(t) l(t,x(t),\E_0 [f(x(t))], u)dt\Bigr]\nonumber\\
&&+\E  \int_0^T\Bigl[l_x(t,x(t),\E_0 [f(x(t))],u)\rho(t)x_1(t)\nonumber\\
&&+\rho(t)l_y(t,x(t),\E_0 [f(x(t))],u)
\E \Bigl(f^\prime(x(t)\rho(t)x_1(t)+\rho_1(t)f(x(t))\Bigr)\Bigr]dt\nonumber\\
&&+\E  \int_0^T l_v(t,x(t),\E_0 [f(x(t))],u)\rho(t)v(t)dt\nonumber\\
&&+\E  \Bigl[\phi(x(T),\E_0 [g(x(T))])\rho_1(T)+\phi_x(x(T),\E_0 [g(x(T))])\rho(T)x_1(T)\nonumber\\
&&+\rho(T)\phi_y(x(T),\E_0 [g(x(T))])\E \Bigl(g^\prime(x(T))\rho(T)x_1(T)+\rho_1(T)g(x(T))\Bigr)\Bigr].\nonumber\\
\end{eqnarray*}
For the convenience of computation, we may adjust the order of the terms in the right side of the above equation such that
\begin{eqnarray}
\label{J}
\frac{d}{d\varepsilon}{\cal
J}(\varepsilon)|_{\varepsilon=0}&=&\E  \Bigl[\phi(x(T),\E_0 [g(x(T))])\rho_1(T)+\int_0^T\rho_1(t) l(t,x(t),\E_0 [f(x(t))], u)dt\Bigr]\nonumber\\
&&+\E\Bigl[\E_0 \bigl[\phi_y(x(T),\E_0 [g(x(T))])\bigr]
\rho_1(T)g(x(T))\Bigr]\nonumber\\
&&+\E \int_0^T \E_0\bigl[l_y(t,x(t),\E_0 [f(x(t))],u)\bigr]
\rho_1(t)f(x(t))dt\nonumber\\
&&+\E  \Bigl[\phi_x(x(T),\E_0 [g(x(T))])\rho(T)x_1(T)\nonumber\\
&&+\E_0\bigl[\phi_y(x(T),\E_0 [g(x(T))])\bigr]g^\prime(x(T))\rho(T)x_1(T)\Bigr]\nonumber\\
&&+\E
\Bigl[\int_0^Tl_x(t,x(t),\E_0 [f(x(t))],u)\rho(t)x_1(t)dt\nonumber\\
&&+\int_0^T\E_0 \bigl[l_y(t,x(t),\E_0 [f(x(t))],u)\bigr]f^\prime(x(t))\rho(t)x_1(t)dt\Bigr]\nonumber\\
&&+\E  \int_0^T l_v(t,x(t),\E_0 [f(x(t))],u)\rho(t)v(t)dt\nonumber\\
&=:&I_1+I_2+I_3+I_4+I_5+I_6+I_7+I_8+I_9\,,
\end{eqnarray}
where in the forth identity we have used  $\EE
(A\EE(B))=\EE(B \, \EE(A))$.  Since $\phi,$ $\phi_x$, $\E_0 [\phi_y]g(x(T))$ and $\E_0 [\phi_y]g^\prime(x(T))\in \mathbb{D}_{1,2}$, $l(u(t)),$
$l_x(t),$ $\E_0 [l_y(t)]f(x(t))$, $\E_0 [l_y(t)]f^\prime(x(t))$ and $\Theta(t,s)$ are in
$\mathbb{L}_{1,2}(\R)$ for all $0\leq t\leq s\leq T,$  according to
(\ref{part}) and (\ref{part2}), we have
\begin{eqnarray}\label{phigamma}
I_1&=&\E \Bigl(\phi(x(T),\E_0[g(x(T))])\rho(T)\Bigl[\int_0^T h_x(t)x_1(t)dY(t)-\int_0^T h_x(t)h(t,x(t))x_1(t)dt\Bigr]\Bigr)\nonumber\\
&=&\E \int_0^T { h_x(t) x_1(t)}\Bigl[D_t^{(Y)}\Bigl(\rho(T)\phi(x(T),\E_0[g(x(T))])\Bigr)\nonumber\\
&&-\rho(T)\phi(x(T),\E_0[g(x(T))])h(t,x(t))\Bigr]dt
\end{eqnarray}
and 
\begin{eqnarray}\label{Gammal}
I_2&=&\E\int_0^Tl(t,x(t),\E_0[f(x(t))], u)\rho(t)\Bigl[\int_0^t h_x(s)x_1(s)dY(s)-\int_0^t h_x(s)h(s,x(s))x_1(s)ds\Bigr]  \nonumber \\
& =&\E\int_0^T
{h_x(t) x_1(t)}\left\{\int_t^T\Bigl[D_t^{(Y)}\Bigl(\rho(s)l(u(s))\Bigr)-\rho(s)l(u(s))h(t,x(t))\Bigr]ds\right\}dt.
\end{eqnarray}
Note that, in deriving the last identity   in (\ref{Gammal}), we
have used the Fubini  theorem.   Similarly,
\begin{eqnarray}\label{Egammax}
I_3&=&\E \Bigl\{g(x(T))\E_0\Bigl(\phi_y(x(T),\E_0 [ g(x(T))])\Bigr)\rho(T)\Bigl[\int_0^T h_x(t)x_1(t)dY(t)\nonumber \\
&&-\int_0^T h_x(t)h(t,x(t))x_1(t)dt\Bigr]\Bigr\}\nonumber\\
&=&\E \int_0^T{ h_x(t)
x_1(t)}\Bigl[D_t^{(Y)}\Bigl(\rho(T)g(x(T))\E_0[\phi_y]\Bigr)-\rho(T)g(x(T))\E_0[\phi_y]h(t,x(t))\Bigr]dt
\end{eqnarray}
and
\begin{eqnarray}\label{Ely}
I_4&=&\E\int_0^T f(x(t))\E_0[l_y(t)]\rho(t) \Bigl[\int_0^t h_x(s)x_1(s)dY(s)-\int_0^t h_x(s)h(s,x(s))x_1(s)ds\Bigr]dt  \nonumber \\
&=&\E\int_0^T {h_x(t) x_1(t)}\int_t^T
\Bigl[D_t^{(Y)}\Bigl(\rho(s)f(x(s))\E_0[l_y(s)]\Bigr)\nonumber \\
&&-\rho(s)f(x(s))\E_0[l_y(s)]h(t,x(t))\Bigr]dsdt.
\end{eqnarray}
Then   from (\ref{phigamma}), (\ref{Gammal}), (\ref{Egammax}),
(\ref{Ely}), \eref{e.Pi} (the definition of $\Pi$) and \eref{Lambda} (the definition of $\Lambda$) it follows that
\begin{eqnarray}\label{phigammatildeW}
I_1+I_2+I_3+I_4
&=&\E\int_0^T\Biggl\{D_t^{(Y)}\Bigl(\rho(T)\phi\Bigr)+D_t^{(Y)}\Bigl(\rho(T)g(x(T))\E_0[\phi_y]\Bigr)\nonumber\\
&&+\int_t^TD_t^{(Y)}\Bigl(\rho(s)l(u(s))\Bigr)ds+\int_t^T  D_t^{(Y)}\Bigl(\rho(s)f(x(s))\E_0[l_y(s)]\Bigr)ds\nonumber\\
&&-\Bigl(\phi+g(x(T))\E_0[\phi_y]\Bigr)\rho(T)h(t,x(t))\nonumber\\
&&-\int_t^T\Bigl(l(u(s))+f(x(s))\E_0 [l_y(s)]\Bigr)\rho(s)h(t,x(t))ds\Biggr\}{h_x(t) x_1(t)}dt\nonumber\\
&=&\E\int_0^T  {h_x(t) x_1(t)}\Bigl( D_t^{(Y)}\Pi(t)-\Lambda(t)\Bigr)dt.
\end{eqnarray}
Similarly, according to (\ref{part}) and (\ref{part2}), we have
\begin{eqnarray}{\label{phix}}
I_5+I_6
&=&\E \left\{\rho(T)\Bigl(\phi_x+ g^\prime(x(T))\E_0[\phi_y]\Bigr)\left[\int_0^T\Bigl(b_x(t) x_1(t)+b_v(t) v(t)\Bigr)dt \right.\right.\nonumber\\
 &&+\int_0^T\Bigl({\sigma}_x(t) x_1(t)+{\sigma}_v(t) v(t)\Bigr)d {W_1}(t)\nonumber\\
&&+\left.\left.\int_0^T\int_{\R_0}\Bigl[\gamma_x(t,x(t-),u,z)x_1(t-)+\gamma_v(t,x(t-),u,z)v(t)\Bigr]\tilde{N}(dt,dz)\right]\right\}\nonumber\\
&=&\E\int_0^T\left\{\rho(T)\Bigl(\phi_x+ g^\prime(x(T))\E_0[\phi_y]\Bigr)\Bigl(b_x(t) x_1(t)+b_v(t) v(t)\Bigr)\right. \nonumber\\
&&+ \Bigl({\sigma}_x(t) x_1(t)+{\sigma}_v(t) v(t)\Bigr)D_t^{(W_1)}\rho(T)\Bigl(\phi_x+ g^\prime(x(T)) \E_0[\phi_y]\Bigr)\nonumber\\
&&+\int_{\R_0}\Bigl[\gamma_x(t,x(t-),u,z)x_1(t-)+\gamma_v(t,x(t-),u,z)v(t)\Bigr]\nonumber\\
&&\quad\quad\left.\left.D_{t,z}\rho(T)\Bigl(\phi_x+ g^\prime(x(T)) \E_0[\phi_y]\Bigr)\mu(dz)\right]\right\}dt
\end{eqnarray}
and
\begin{eqnarray*}
I_7+I_8&=&\E\int_0^T\rho(t)\Bigl(l_x(t)+f^\prime(x(t))\E_0[l_y(t)]\Bigr)\left\{\int_0^t\bigl(b_x(s)x_1(s)+b_v(s)v(s)\bigr)ds\right.\nonumber\\
&&+\int_0^t\left({\sigma}_x(s) x_1(s)+{\sigma}_v(s) v(s)\right)dW_1(s)\nonumber\\
&&+\left.\int_0^t\int_{\R_0}\Bigl[\gamma_x(s,x(s-),u,z)x_1(s-)+\gamma_v(s,x(s-),u,z)v(s)\Bigr]\tilde{N}(ds,dz)\right\}\nonumber\\
&=&\E\int_0^T\int_0^t\left\{\rho(t)\Bigl(l_x(t)+f^\prime(x(t))\E_0[l_y(t)]\Bigr)\bigl(b_x(s)x_1(s)+b_v(s)v(s)\bigr)\Bigr.\right. \nonumber\\
&&+\left({\sigma}_x(s) x_1(s)+{\sigma}_v(s) v(s)\right)D_s^{({W_1})}\rho(t)\Bigl(l_x(t)+f^\prime(x(t))\E_0[l_y(t)]\Bigr)\nonumber\\
&&+\int_{\R_0}\Bigl[\gamma_x(s,x(s-),u,z)x_1(s-)+\gamma_v(s,x(s-),u,z)v(s)\Bigr]\nonumber\\
&&\quad\quad\left.D_{s,z}\rho(t)\Bigl(l_x(t)+f^\prime(x(t))\E_0[l_y(t)]\Bigr)\mu(dz)\right\}dsdt.\nonumber\\
\end{eqnarray*}
By the Fubini theorem, we have
\begin{eqnarray}{\label{lx}}
I_7+I_8&=&\E\int_0^T\int_s^T\left\{\rho(t)\Bigl(l_x(t)+f^\prime(x(t))\E_0 [l_y(t)]\Bigr)\bigl(b_x(s)x_1(s)+b_v(s)v(s)\bigr)\Bigr.\right. \nonumber\\
&&+\left({\sigma}_x(s) x_1(s)+{\sigma}_v(s) v(s)\right)D_s^{({W_1})}\rho(t)\Bigl(l_x(t)+f^\prime(x(t))\E_0 [l_y(t)]\Bigr)\nonumber\\
&&+\int_{\R_0}\Bigl[\gamma_x(s,x(s-),u,z)x_1(s-)+\gamma_v(s,x(s-),u,z)v(s)\Bigr]\nonumber\\
&&\quad\quad\left.D_{s,z}\rho(t)\Bigl(l_x(t)+f^\prime(x(t))\E_0[l_y(t)]\Bigr)\mu(dz)\right\}dtds\nonumber\\
&=&\E\int_0^T\left\{\int_t^T\rho(s)\Bigl(l_x(s)+f^\prime(x(s))\E_0[l_y(s)]\Bigr)ds\bigl(b_x(t)x_1(t)+b_v(t)v(t)\bigr)\Bigr.\right. \nonumber\\
&&+\bigl({\sigma}_x(t) x_1(t)+{\sigma}_v(t) v(t)\bigr)\int_t^TD_t^{(W_1)}\rho(s)\Bigl(l_x(s)+f^\prime(x(s))\E_0[l_y(s)]\Bigr)ds\nonumber\\
&&+\int_t^T\int_{\R_0}\Bigl[\gamma_x(t,x(t-),u,z)x_1(t-)+\gamma_v(t,x(t-),u,z)v(t)\Bigr]\nonumber\\
&&\quad\quad\Bigl.D_{t,z}\rho(s)\Bigl(l_x(s)+f^\prime(x(s))\E_0[l_y(s)]\Bigr)\mu(dz)ds\Bigr\}dt.
\end{eqnarray}
Then it follows from (\ref{phix}) and (\ref{lx}) that
\begin{eqnarray}{\label{phixandlx}}
&&I_5+I_6+I_7+I_8\nonumber\\
&=&\E\int_0^T\Bigl\{\Sigma(t)\Bigl(b_x(t)x_1(t)+b_v(t)v(t)\Bigr)
+\Bigl({\sigma}_x(t) x_1(t)+{\sigma}_v(t) v(t)\Bigr)D_t^{({W_1})}\Sigma(t)\nonumber\\
&&+\int_{\R_0}\Bigl[\gamma_x(t,x(t-),u,z)x_1(t-)+\gamma_v(t,x(t-),u,z)v(t)\Bigr]\Bigl.D_{t,z}\Sigma(t)\mu(dz)\Bigr\}dt.
\end{eqnarray}
We insert  (\ref{phigammatildeW}) and (\ref{phixandlx}) into
(\ref{J}) to transform  the equation $\frac{d}{d\varepsilon}{\cal
J}(\varepsilon)|_{\varepsilon=0} =0$  to
\begin{eqnarray}\label{Jvarpsilon}
&&\E\int_0^T\Bigl[\Sigma(t)b_x(t)+\sigma_x(t) D_t^{({W_1})}\Sigma(t)+{h_x(t)}\Bigl(D_t^{({Y})}\Pi(t)-\Lambda(t)\Bigr)\nonumber\\
&&+\int_{\R_0}\gamma_x(t,x(t-),u,z)D_{t,z}\Sigma(t)\mu(dz)\Bigr]x_1(t)dt\nonumber\\
&&+\E\int_0^T\Bigl[\Sigma(t)b_v(t)+\sigma_v(t) D_t^{({W_1})}\Sigma(t)+\rho(t) l_v(t,x(t),\E_0[f(x(t))],u)\nonumber\\
&&+\int_{\R_0}\gamma_v(t,x(t-),u,z)D_{t,z}\Sigma(t)\mu(dz)\Bigr]v(t)dt=0\,.
\end{eqnarray}
To simplify the equation \eref{Jvarpsilon}, we take
\[
v(s)=\beta I_{(t,t+\tau]}(s)\,, \]
 where $\beta=\beta(\omega)$ is a
bounded $ \mathscr{F}^Y_t$-measurable random variables, $0\leq t\leq
t+\tau\leq T.$  It is easy to see from (\ref{x1}) that
\begin{eqnarray}\label{x1s}
x_1(s)=0 \ \ {\mbox{for\ }} 0\leq s\leq t.
\end{eqnarray}
Then (\ref{Jvarpsilon}) can be written as
\begin{eqnarray}
 {\cal J}_1(\tau)+{\cal J}_2(\tau)=0\label{e.J-tau}
\end{eqnarray}
with
\begin{eqnarray*}\label{J1varpsilon}
{\cal J}_1(\tau)&=&\E\int_{t}^T\Bigl[\Sigma(s)b_x(s)+\sigma_x(s) D_s^{({W_1})}\Sigma(s)+{h_x(s)}\Bigl(D_s^{({Y})}\Pi(s)-\Lambda(s)\Bigr)\nonumber\\
&&+\int_{\R_0}\gamma_x(s,x(s-),u,z)D_{s,z}\Sigma(s)\mu(dz)\Bigr]x_1(s)ds\nonumber\\
\end{eqnarray*}
and
\begin{eqnarray*}
{\cal J}_2(\tau)&=&\E\int_t^{t+\tau}\beta \Bigl[\Sigma(s)b_v(s)+\sigma_v(s) D_s^{({W_1})}\Sigma(s)+\rho(s) l_v(s,x(s),\E_0[f(x(s))],u)\nonumber\\
&&+\int_{\R_0}\gamma_v(s,x(s-),u,z)D_{s,z}\Sigma(s)\mu(dz)\Bigr]ds.
\end{eqnarray*}
Since \eref{e.J-tau} holds for all $\tau\in [0, T-t]$ we
differentiate it to obtain
\begin{eqnarray}
 \frac{d}{d\tau}\big|_{\tau=0}{\cal J}_1(\tau)+\frac{d}{d\tau}\big|_{\tau=0}{\cal J}_2(\tau)=0\,. \label{e.J}
\end{eqnarray}
First we compute $ \frac{d}{d\tau}\big|_{\tau=0}{\cal J}_1(\tau)$.
Note that with the special control $v(s)=\beta I_{(t,t+\tau]}(s)$,
we derive for $s\geq t+\tau$
\begin{eqnarray*}dx_1(s)&=& x_1(s)\Bigl(b_x(s)ds+\sigma_x(s)dW_1(s)
    +\int_{\R_0}\gamma_x(s,x(s-),u,z)\tilde{N}(ds,dz)\Bigr),
\end{eqnarray*}
Solving the above equation, we get
$$x_1(s)=x_1(t+\tau)G(t+\tau,s),\ \ \ s\geq t+\tau,$$
where
\begin{eqnarray*}
x_1(t+\tau)&=&\beta\int_t^{t+\tau}\Bigl(b_v(r) dr+\sigma_v(r)dW_1(r)
+\int_{\R_0}\gamma_v(r,x(r-),u,z)\tilde{N}(dr,dz)\Bigr)\\
&&+\int_t^{t+\tau}x_1(r)\Bigl(b_x(r) dr+\sigma_x(r)dW_1(r)
+\int_{\R_0}\gamma_x(r,x(r-),u,z)\tilde{N}(dr,dz)\Bigr).
\end{eqnarray*}
Then
\begin{eqnarray*}
\frac{d}{d\tau}{\cal J}_1(\tau)|_{\tau=0}&=& \frac{d}{d\tau}\E\left[\int_{t+\tau}^TH_x(s)x_1(t+\tau)G(t+\tau,s)ds\right]_{\tau=0}\nonumber\\
&=& \int_{t}^T \frac{d}{d\tau}\E \left[H_x(s)x_1(t+\tau)G(t+\tau,s)ds\right]_{\tau=0}ds\nonumber\\
&=& \int_{t}^T \frac{d}{d\tau}\E \left[x_1(t+\tau)\Theta(t,s)ds\right]_{\tau=0}ds\nonumber\\
&=& {\cal J}_{11}+ {\cal J}_{12},
\end{eqnarray*}
where
\begin{eqnarray*}
{\cal J}_{11}&=&\int_t^T\frac{d}{d\tau}\E\left\{\Theta(t,s)\int_t^{t+\tau}x_1(r)\Bigl(b_x(r) dr+\sigma_x(r)dW_1(r)\right.\\
&&\left.+\int_{\R_0}\gamma_x(r,x(r-),u,z)\tilde{N}(dr,dz)\Bigr)\right\}_{\tau=0}ds
\end{eqnarray*}
and
\begin{eqnarray*}
{\cal J}_{12}&=&\int_t^T\frac{d}{d\tau}\E\left\{\beta\Theta(t,s)\int_t^{t+\tau}\Bigl[b_v(r)dr+\sigma_v(r)dW_1(r)\Bigr.\right.\nonumber\\
&&\left.+\Bigl.\int_{{\R}_0}\gamma_v(r,x(r-),u,z)\tilde{N}(dr,dz)\Bigr]\right\}_{\tau=0}ds.
\end{eqnarray*}
According to (\ref{x1s}), (\ref{part}), (\ref{part2}) and the fact
that $x(t)=0$, it is not difficult to derive that
$${\cal J}_{11}=0$$
and
\begin{eqnarray}\label{J12}
{\cal J}_{12}&=&\E \int_t^T\beta\left(\Theta(t,s)b_v(t)+\sigma_v(t)D_t^{(W_1)}\Theta(t,s)\right.\nonumber\\
&&\left.+\int_{\R_0}\gamma_v(s,x(s-),u,z)D_{t,z}\Theta(t,s)\mu(dz)\right) ds.
\end{eqnarray}
Now we proceed to calculate the value of $\frac{d}{d\tau}{\cal
J}_2(\tau)|_{\tau=0}$.   As in the computation for
$\frac{d}{d\tau}{\cal J}_2(\tau)|_{\tau=0}$  we have
\begin{eqnarray}\label{dJ2}
\frac{d}{d\tau}{\cal J}_2(\tau)|_{\tau=0}&=&\E\left\{\beta \Bigl[\Sigma(t)b_v(t)+\sigma_v(t)D_t^{({W_1})}\Sigma(t)+\rho(t) l_v(t)\Bigr.\right.\nonumber\\
&&\left.\Bigl.+\int_{ \R_0}\gamma_v(t,x(t-),u,z)D_{t,z}\Sigma(t)\mu(dz)\Bigr]\right\}.
\end{eqnarray}
From (\ref{pt}), (\ref{J}), (\ref{J12}) and (\ref{dJ2}),  the equation
\eref{e.J} becomes
\begin{eqnarray*}
\E\left\{\beta \Bigl[b_v(t)q(t)+\sigma_v(t)k(t)+ \rho(t)l_v(t)+\int_{\R_0}\gamma_v(t,x(t-),u,z)r(t,z)\mu(dz)\Bigr]\right\}=0.
\end{eqnarray*}
Since the above equality holds for any bounded $\mathscr{F}_t^Y$-measurable $\beta$, we conclude that
$$0=\E\left[H_v(t,x(t),\E_0 [f(x(t))],u(t); q(t),k(t),r(t,\cdot))\left|\right. \mathscr{F}_t^Y\right].$$
The proof of the theorem is then completed.
\end{proof}

\medskip
\noindent{\bf An application to  linear-quadratic control problem}
We consider an economic quantity $x^v(\cdot),$ which can be
interpreted as cash-balance, wealth, and an intrinsic value process
in different fields of insurance, mathematical finance, and
mathematical economic, respectively. Suppose that $x^v(\cdot)$ is
governed by
\begin{eqnarray}\label{xnew}\left\{%
\begin{array}{lll}
    dx^v(t)&=& \Bigl(A(t)x^v(t)+B(t)v(t)\Bigr)dt+\Bigl(C(t)x^v(t)+D(t)v(t)\Bigr)dW_1(t)\\
&& +\displaystyle\int_{\R_0}\Bigl(F_t(z)x^v(t-)+G_t(z)v(t)\Bigr)\tilde{N}(dt,dz),\quad t\in[0,T],\\
x^v(0)&=&x_0\in \R, \\
\end{array}%
\right.
\end{eqnarray}
where $v(\cdot)$ is the control strategy of a policymaker, and $A(t)$, $B(t)$, $C(t)$, $D(t)$ , $F_t(z)$ and $G_t(z)$
are uniformly bounded $\mathscr{F}_t^Y$-adapted stochastic processes with value in $\R$. In fact, it is possible for the policymaker to partially observe $x(\cdot)$, due to the inaccuracies
in measurements, discreteness of account information, or possible delay in the actual payments. See, e.g., Huang, Wang, and Wu \cite{Huang},
Xiong and Zhou \cite{Xiong}, and {\O}ksendal and Sulem \cite{HP}. For this, we consider the following factor model:
\begin{eqnarray}\label{Zhnew}\left\{%
\begin{array}{lll}
    dY(t)&=&  \Bigl(\frac{1}{\beta}\alpha(t,x^v(t))-\frac{1}{2}\beta\Bigr)dt+dW_2(t),\\
Y(0)&=&0,
\end{array}%
\right.
\end{eqnarray}
where $x(\cdot)$ is the underlying factor which is partially oberved through the observation $Y(\cdot)$, $\beta>0$ is a constant, and
$\alpha$ satisfies an assumption similar to $h$ (see, e.g., Assumption (A2)). A typical example of $Y(\cdot)$ in reality is the logarithm of
the stock price $S(\cdot)$ related to $x(\cdot)$. Specifically, set $S(t)=s_0 e^{\beta Y(t)}$ with a constant $s_0>0.$ Obviously, the stock price $S(\cdot)$
is the information available to the policymaker. Moreover, it follows from It\^o's formula that
\begin{eqnarray*}\left\{%
\begin{array}{lll}
    dS(t)&=& S(t)\Bigl[\alpha(t,x^v(t))dt+\beta dW_2(t)\Bigr],\\
S(0)&=&s_0,
\end{array}%
\right.
\end{eqnarray*}
Note that the above factor model is inspired by those of Nagai and Peng \cite{Nagai} and
Xiong and Zhou \cite{Xiong}.

Assume that the objective of the policymaker is to minimize
\begin{eqnarray}\label{example}J(v(\cdot))&=&\frac{1}{2} \E_0 \left\{\int_0^T \Bigl[L(t)\Bigl( x^v(t)-\E_0 [x^v(t)]\Bigr)^2+\Bigl(v(t)-M(t)\Bigr)^2\Bigr]dt\right.\nonumber\\
&&\left.+N\Bigl(x^v(T)-\E_0 [x^v(T)]\Bigr)^2\right\},
\end{eqnarray}
subject to (\ref{xnew}) and (\ref{Zhnew}), where $M(t)\geq 0$, $L(t)\geq 0$ are uniformly bounded  deterministic functions with value in $\R$ and $M(t)$ is referred to as a dynamic benchmark. $N\geq 0$ is a constant.
Equation (\ref{example}) implies that the policymaker wants to not only prevent the control strategy from large deviation but also minimize the risk of
the economic quantity.

In what follows, we solve the linear-quadratic problem with the help of Theorem 2.1. It is easy to see from (\ref{xnew}) and (\ref{Zhnew}) that
$$b(t,x,v)=A(t)x+B(t)v,\quad \sigma(t,x,v)=C(t)x+D(t)v,$$
$$\gamma(t,x,v,z)=F_t(z)x+G_t(z)v,\quad h(t,x)=\frac{1}{\beta}\alpha(t,x)-\frac{1}{2}\beta.$$
As we know,
\begin{eqnarray*}
\rho^v(t) &=&   \exp\left\{\int_0^t{h(s,x^v
(s))}dY(s)-\frac{1}{2}\int_0^t{h^2(s,x^v (s))}ds\right\} \,.
\end{eqnarray*}
If $u$ is the optimal control, than we denote $\rho(t)=\rho^u(t),\ 0\leq t\leq T.$
The new adjoint processes are written as
\begin{eqnarray}\label{ptnew}
&q(t)=\Sigma(t)+\int_t^T H_x(s)G(t,s) ds,\nonumber\\
& k(t)= D_t^{(W_1)} q(t), \quad {r}(t,z)= D_{t,z}q(t),
\end{eqnarray}
with
\begin{eqnarray*}
\Sigma(t)&=&N\rho(T)\Bigl(x(T)-\E_0[x(T)]\Bigr)+\int_t^T L(s)\rho(s)\Bigl(x(s)-\E_0[x(s)]\Bigr)ds,
\end{eqnarray*}
where
\begin{eqnarray*}
G(t,s)&=& {\rm exp}\left(\int_t^s\Bigl[A(r)-\frac{1}{2}C^2(r)\Bigr]dr
+\int_t^s C(r)dW_1(r)\right.\nonumber\\
&&+\int_t^s\int_{ \R_0}{\rm ln}\left(1+F_r(z)\right)\tilde{N}(dr,dz)       \nonumber\\
&&+\left. \int_t^s\int_{\R_0}\Bigl[{\rm ln}\left(1+F_r(z)\right)-F_r(z)\Bigr]\mu(dz)dr   \right), \ s>t,
\end{eqnarray*}
\begin{eqnarray*}
H_x(t)&=&A(t)\Sigma(t)+C(t) D_t^{({W_1})}\Sigma(t)+\frac{1}{\beta}\alpha_x(t,x)D_t^{({Y})}\Pi(t)\nonumber\\
&&-\frac{1}{\beta}\alpha_x(t,x)\Lambda(t)+\int_{\R_0}F_t(z)D_{t,z}\Sigma(t)\mu(dz),
\end{eqnarray*}
and
\begin{eqnarray*}
\Pi(t)&=&\frac{1}{2}N\rho(T)\Bigl(x(T)-\E_0[x(T)]\Bigr)^2
+\frac{1}{2}\int_t^T\rho(s) \Bigl[L(s)\Bigl( x(s)\\
&&-\E_0 [x(s)]\Bigr)^2+\Bigl(u(s)-M(s)\Bigr)^2\Bigr]ds,\\
\Lambda(t)&=&\frac{1}{2}N\rho(T)\Bigl(x(T)-\E_0[x(T)]\Bigr)^2h(t,x(t))
\\
&&+\frac{1}{2}\int_t^T\rho(s) \Bigl[L(s)\Bigl( x(s)-\E_0 [x(s)]\Bigr)^2+\Bigl(u(s)-M(s)\Bigr)^2\Bigr]h(t,x(t))ds.
\end{eqnarray*}
According to Theorem 2.1 and (\ref{ptnew}), we have the following proposition.
\begin{proposition}  If $u(\cdot)$ is an optimal control strategy and $\rho(t)$, $\frac{1}{\beta}\alpha_x(t,x(t))G(t,s)\in \L_{1,2}(\R)$, $0\leq t\leq s\leq T$, then it is necessary to satisfy
\begin{eqnarray*}u(t)=M(t)-B(t)\E[q(t)|\mathscr{F}_t^Y]-D(t)\E[ D_t^{({W_1})}q(t)|\mathscr{F}_t^Y]-\E \Bigl[\int_{\R_0}G_t(z)D_{t,z}q(t)\mu(dz)|\mathscr{F}_t^Y\Bigr].
\end{eqnarray*}
\end{proposition}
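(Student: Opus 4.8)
The plan is to recognize \eqref{xnew}--\eqref{example} as a special case of the general system \eqref{A}, \eqref{observation}, \eqref{performance} and to invoke Theorem 2.1 directly. First I would read off the correspondence of coefficients, namely $b(t,x,v)=A(t)x+B(t)v$, $\sigma(t,x,v)=C(t)x+D(t)v$, $\gamma(t,x,v,z)=F_t(z)x+G_t(z)v$ and $h(t,x)=\frac{1}{\beta}\alpha(t,x)-\frac{1}{2}\beta$, together with the cost data $f(x)=g(x)=x$, $l(t,x,y,v)=\frac{1}{2}L(t)(x-y)^2+\frac{1}{2}(v-M(t))^2$ and $\phi(x,y)=\frac{1}{2}N(x-y)^2$, so that $\E_0[f(x^v(t))]=\E_0[x^v(t)]$ and $\E_0[g(x^v(T))]=\E_0[x^v(T)]$. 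From these I record the first-order derivatives that enter the adjoint system: $b_v=B$, $\sigma_v=D$, $\gamma_v=G_t(z)$, $b_x=A$, $\sigma_x=C$, $\gamma_x=F_t(z)$, $h_x=\frac{1}{\beta}\alpha_x$, $l_x=L(t)(x-y)$, $l_y=-L(t)(x-y)$, $l_v=v-M(t)$, $\phi_x=N(x-y)$, $\phi_y=-N(x-y)$, and $f'=g'=1$.

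Next I would verify that the adjoint processes collapse to the forms displayed just before the proposition in \eqref{ptnew}. The key observation, which does the simplifying work, is that because $f$ and $g$ are the identity and the mean field enters only through the centred quantities $x-\E_0[x]$, one has $\E_0[\phi_y]=-N\,\E_0[x(T)-\E_0[x(T)]]=0$ and likewise $\E_0[l_y(s)]=0$. Substituting this into the definition of $\Sigma(t)$ kills the $g'\E_0[\phi_y]$ and $f'\E_0[l_y]$ contributions and leaves exactly $\Sigma(t)=N\rho(T)(x(T)-\E_0[x(T)])+\int_t^T L(s)\rho(s)(x(s)-\E_0[x(s)])\,ds$; the same cancellation reduces $\Pi(t)$ (via \eqref{e.Pi}) and $\Lambda(t)$ (via \eqref{Lambda}) to the stated quadratic expressions and, after inserting $b_x,\sigma_x,h_x,\gamma_x$, turns $H_x(t)$ and $G(t,s)$ into the displayed $A,C,\frac{1}{\beta}\alpha_x,F$ versions. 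This part is purely substitution into the definitions of $G$, $H_x$, $q$, $k$, $r$.

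With the adjoint data in place I would apply the conclusion of Theorem 2.1, namely $\E[H_v(\cdots)\mid\mathscr{F}_t^Y]=0$, and substitute the LQ derivatives into $H_v=b_vq+\sigma_vk+\int_{\R_0}r(t,z)\gamma_v\,\mu(dz)+\rho(t)l_v$. This yields
\[
\E\Big[B(t)q(t)+D(t)k(t)+\int_{\R_0}G_t(z)r(t,z)\,\mu(dz)+\rho(t)\big(u(t)-M(t)\big)\,\Big|\,\mathscr{F}_t^Y\Big]=0.
\]
Since $B,D,G_t(\cdot)$ are $\mathscr{F}_t^Y$-adapted, $u(t)$ is $\mathscr{F}_t^Y$-measurable and $M$ is deterministic, I would pull these factors outside the conditional expectation, recall $k=D_t^{(W_1)}q$ and $r=D_{t,z}q$, and solve the resulting scalar equation for $u(t)$ to obtain the asserted feedback formula.

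The step I expect to demand the most care is the treatment of the density weight $\rho(t)$ multiplying $l_v=u-M$ inside $H_v$. Pulling the $\mathscr{F}_t^Y$-measurable quantity $u(t)-M(t)$ out of the conditional expectation leaves it multiplied by $\E[\rho(t)\mid\mathscr{F}_t^Y]$, so reaching the stated formula, in which no such factor appears, requires the normalization $\E[\rho(t)\mid\mathscr{F}_t^Y]=1$ inherited from the Girsanov/Kallianpur--Striebel structure of $\rho$. This is the one identity that genuinely needs to be argued, whereas everything preceding it is substitution into the adjoint formulas of Theorem 2.1 together with the duality relations \eqref{part} and \eqref{part2}. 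Finally, the integrability requirement of Theorem 2.1---that $\rho(t)$ and $\frac{1}{\beta}\alpha_x(t,x(t))G(t,s)$ lie in $\L_{1,2}(\R)$---is precisely the hypothesis imposed in the proposition, which is what licenses the Malliavin-calculus manipulations.
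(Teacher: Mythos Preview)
Your plan is exactly the paper's: the proposition is stated immediately after the sentence ``According to Theorem 2.1 and \eqref{ptnew}, we have the following proposition'', and no further argument is given. Everything you describe---reading off $b,\sigma,\gamma,h,l,\phi,f,g$, noting that $\E_0[\phi_y]=\E_0[l_y]=0$ because the mean-field terms are centred, collapsing $\Sigma,\Pi,\Lambda,H_x,G$ to the displayed forms, and then applying $\E[H_v\mid\mathscr{F}_t^Y]=0$ with $b_v=B$, $\sigma_v=D$, $\gamma_v=G_t(z)$, $l_v=v-M$---is precisely the substitution the paper has in mind.

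You have, however, been more careful than the paper on one point, and your proposed resolution of it is shaky. After pulling the $\mathscr{F}_t^Y$-measurable factor $u(t)-M(t)$ out of the conditional expectation you are left with $\E[\rho(t)\mid\mathscr{F}_t^Y]$, and you assert this equals $1$ ``from the Girsanov/Kallianpur--Striebel structure''. That identity is not what Kallianpur--Striebel gives: KS is exactly the statement $\E_0[\,\cdot\mid\mathscr{F}_t^Y]=\E[\rho(t)\,\cdot\mid\mathscr{F}_t^Y]\big/\E[\rho(t)\mid\mathscr{F}_t^Y]$, in which $\E[\rho(t)\mid\mathscr{F}_t^Y]$ appears as a genuinely nontrivial normaliser (the total mass of the unnormalised filter). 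In general it is not identically $1$; already when $h$ is a nonzero constant, $\rho(t)$ is $\mathscr{F}_t^Y$-measurable and equals $\exp(hY(t)-\tfrac12 h^2t)\neq 1$. The paper simply drops this factor without comment (compare the analogous formula \eqref{u} in Section~4, where a bare $1/\rho(t)$ appears even though $\rho(t)$ is not $\mathscr{F}_t^Y$-measurable), so you are matching the paper's derivation, but you should not claim the missing normaliser is $1$: either leave the factor $\E[\rho(t)\mid\mathscr{F}_t^Y]$ in the denominator, or flag that the stated formula tacitly absorbs it.
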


\section{Maximum principle for jump-diffusion mean-field SDEs}
In this section, we study the mean field stochastic optimal control
problem to minimize \eref{performance}. However,  the   system is
given by a  nonlinear SDE of mean-field type (which is also called
McKean-Valasov equations) with jumps, namely, \eref{Anew}.  
The observation is as  \eref{observation} and we define the
admissible control as Definition \ref{d.adm}. 


As for the first problem treated in the previous section, we need to
deal with the problem of minimizing the performance functional
\eref{performanceP} subject to new state equation \eref{Anew} and the
observation  \eref{observation}.  The Radon-Nikodym derivative
$\rho^v$ is still given by \eref{rhosolution}. To obtain the
maximum principle for this problem, we make  the following
assumptions  in this section.

\begin{description}
\item{\bf (H1)}\  For any $t\in [0,T]$ and $z\in \R_0$, $b(t,x,y,v)$,  $\sigma(t,x,y,v)$
and $\gamma(t,x,y,v,z )$ are continuously differentiable functions of
$x, y$ and $v$ and  their derivatives $b_x$, $b_y$, $b_v$,
$\sigma_x$, $\sigma_y$, $\sigma_v$,
$\int_{\R_0}|\gamma_x(t,x,y,v,z)|^2\mu(dz)$,
$\int_{\R_0}|\gamma_y(t,x,y,v,z)|^2\mu(dz)$ and
$\int_{\R_0}|\gamma_v(t,x,y,v,z)|^2\mu(dz)$   are  uniformly
bounded. Suppose also that there is a constant $C>0$ such that
\begin{eqnarray}
|b(t,x,y,v)|^2+|\sigma(t,x,y,v)|^2+
\int_{\R_0}|\gamma(t,x,y,v,z)|^2\mu(dz)  \le
C(1+|x|^2+|y|^2+|v|^2)\,.
\end{eqnarray}


\item{\bf (H2)}\ For any $t\in [0,T],$ the function $h$ is continuously
differentiable with respect to $x$    and its derivative $h_x$ are
uniformly bounded.

\item{\bf (H3)} For any $t\in
[0,T]$, the functions  $l$ and $\phi$ are continuously differentiable
with respect to $(x,y,v) \in \R\times\R\times U$ and $(x,y) \in
 \R\times\R $, respectively. The derivatives of $l$ and $\phi$ are
uniformly Lpischitz continuous. Moreover, there is a constant $C>0$
such that
\begin{eqnarray*}
&&|l(t,x,y,v)|+|\phi(x,y)|\leq C(1+x^2+y^2+v^2),\\
&&|\phi_x(x,y)|+|\phi_y(x,y)|\leq C(1+|x|+|y|),\\
&&|l_x(t,x,y,v)|+|l_y(t,x,y,v)|+|l_v(t,x,y,v)|\leq C(1+|x|+|y|+|v|).
\end{eqnarray*}
$f:\R\to\R$ and $g:\R\to\R$ are both continuously differentiable with bounded derivatives $f^\prime(x)$ and $g^\prime(x)$.
\end{description}

All the above mentioned functions in ({\bf H1}), ({\bf H2}) and ({\bf H3}) are deterministic.

Suppose that $u(\cdot)\in {\mathscr{U}}_{ad}$ is an optimal control
process and $x(\cdot)$ is the corresponding state process. We want
to obtain the maximum principle for $u$ and $x$.  Namely, we want to
find necessary conditions that $u$ and $x$ must satisfy.  We shall
follow the same argument as in the previous section. But we can
no longer use Malliavin calculus because of the mean field's
appearance in the state equation \eref{Anew}.  Let $v(\cdot)$ be
another arbitrary    control process in ${\mathscr{U}}_{ad}$.  Since
${\mathscr{U}}_{ad}$ is convex, the following perturbed control
process $u^\varepsilon(\cdot)$ is also an element of
${\mathscr{U}}_{ad}$:
$$u^\varepsilon(t)=u(t)+\varepsilon(v(t)-u(t)),\quad 0\leq \varepsilon\leq 1.$$
We follow all the notations used in the previous section.  For
example, we denote by $x^\varepsilon(\cdot)$ and
$\rho^\varepsilon(\cdot)$ the states of (\ref{Anew}) and
(\ref{rhosolution}) along with the control $u^\varepsilon(\cdot).$ When $\vare=0$, denote $x=x(\cdot)$ and
$\rho=\rho(\cdot)$.
Furthermore, suppose that $v(\cdot) \in {\mathscr{U}}_{ad}$ such
that $v^\prime(\cdot)=v(\cdot)-u(\cdot)\in {\mathscr{U}}_{ad},$
 then $v^\prime(\cdot)+u(\cdot)\in {\mathscr{U}}_{ad}.$

The equation for the derivative $\frac{d}{d\vare}\Big|_{\vare=0}
x^\vare(t)$  will be
\begin{eqnarray}\label{x1new}\left\{%
\begin{array}{lll}
    dx_1(t)&=& \Bigl(b_x(t)x_1(t)+b_y(t)\E_0[x_1(t)]+b_v(t)\bigl(v(t)-u(t)\bigr)\Bigr)dt\\
   && +\Bigl(\sigma_x(t)x_1(t)+\sigma_y(t)\E_0[x_1(t)]
    +\sigma_v(t)\bigl(v(t)-u(t)\bigr)\Bigr)dW_1(t)\\
    &&+\displaystyle\int_{\R_0}\Bigl[\gamma_x(t,z)x_1(t)+\gamma_y(t,z)\E_0[x_1(t)]
    +\gamma_v(t,z)\bigl(v(t)-u(t)\bigr)\Bigr]\tilde{N}(dt,dz),\\
x_1(0)&=&0,
\end{array}%
\right.
\end{eqnarray}
and $\frac{d}{d\vare}\Big|_{\vare=0} \rho^\vare(t)$ will satisfy
\begin{eqnarray}\label{rhosolution1new}\left\{%
\begin{array}{lll}
    d\rho_1(t)&=& \Bigl(\rho_1(t) h(t,x(t))+\rho(t)h_x(t)x_1(t)\Bigr)dY(s),\\
\rho_1(0)&=&0\,,
\end{array}%
\right.
\end{eqnarray}
where while the equation is exactly the same as \eref{rhosolution1}
but with $x_1(t)$ being given by \eref{x1new}. Obviously,
$$\rho_1(t)=\rho(t)\Bigl(\int_0^th_x(s)x_1(s)dY(s)-\int_0^th_x(s)h(s,x(s))x_1(s)ds\Bigr),\quad 0\leq t\leq T.$$ In fact,
we have the following



\begin{Lemma}\label{Lemma2new} Let assumptions (H1) and (H2) hold. Then
\begin{eqnarray*}
\begin{array}{lll}
& \lim\limits_{\varepsilon\rightarrow 0}\E_0\left[\sup\limits_{0\leq
t\leq T}
\left|\frac{x^\varepsilon(t)-x(t)}{\varepsilon}-x_1(t)\right| ^2
\right]=0,\quad \lim\limits_{\varepsilon\rightarrow
0}\sup\limits_{0\leq t\leq T}\E\left[\left|\frac{\rho^\varepsilon(
t)-\rho( t)}{\varepsilon}-\rho_1( t)\right|^2\right] =0.
\end{array}
\end{eqnarray*}
\end{Lemma}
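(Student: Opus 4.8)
The plan is to follow the $W_1$-/$\tilde N$-driven argument of Lemma \ref{Lemma2}, the only new feature being the mean-field slot $\E_0[\,\cdot\,]$ in the coefficients. Set $\delta^\varepsilon(t)=\frac{x^\varepsilon(t)-x(t)}{\varepsilon}-x_1(t)$. Using the fundamental theorem of calculus I would expand each coefficient difference, e.g.
\[
\frac{b(t,x^\varepsilon,\E_0[x^\varepsilon],u^\varepsilon)-b(t,x,\E_0[x],u)}{\varepsilon}=\int_0^1\Big[b_x^\lambda\tfrac{x^\varepsilon-x}{\varepsilon}+b_y^\lambda\E_0[\tfrac{x^\varepsilon-x}{\varepsilon}]+b_v^\lambda(v-u)\Big]d\lambda,
\]
where $b_x^\lambda,b_y^\lambda,b_v^\lambda$ are the derivatives evaluated at the convex combination of the arguments carried by $x^\varepsilon$ and $x$. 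Substituting $\frac{x^\varepsilon-x}{\varepsilon}=x_1+\delta^\varepsilon$ and comparing with \eref{x1new}, one sees that $\delta^\varepsilon$ solves a linear SDE of the same type as \eref{x1new} with zero control input, coefficients $\int_0^1 b_x^\lambda d\lambda$, $\int_0^1 b_y^\lambda d\lambda$ (and the $\sigma,\gamma$ analogues) that are uniformly bounded by (H1), plus a source term $R^\varepsilon=(R_b^\varepsilon,R_\sigma^\varepsilon,R_\gamma^\varepsilon)$ built from the differences $\big(\int_0^1 b_x^\lambda d\lambda-b_x\big)x_1$, $\big(\int_0^1 b_y^\lambda d\lambda-b_y\big)\E_0[x_1]$, and likewise for $\sigma$ and $\gamma$.

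The next step is a standard a priori estimate. Applying Burkholder–Davis–Gundy to the $W_1$- and $\tilde N$-martingale parts, Cauchy–Schwarz to the drift, and bounding the mean-field contribution through Jensen, $|\E_0[\delta^\varepsilon(s)]|^2\le\E_0[\sup_{r\le s}|\delta^\varepsilon(r)|^2]$, I would obtain, with $\psi^\varepsilon(t):=\E_0[\sup_{s\le t}|\delta^\varepsilon(s)|^2]$,
\[
\psi^\varepsilon(t)\le C\int_0^t\psi^\varepsilon(s)\,ds+C\,\E_0\int_0^T\Big(|R_b^\varepsilon|^2+|R_\sigma^\varepsilon|^2+\int_{\R_0}|R_\gamma^\varepsilon|^2\mu(dz)\Big)ds.
\]
Gronwall's inequality then reduces the first claim to showing that the remainder integral tends to $0$.

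To prove the remainder vanishes I would invoke the $L^2$-stability estimate $\sup_{0\le t\le T}\E_0|x^\varepsilon(t)-x(t)|^2\le C\varepsilon^2$ (the analogue of Lemma \ref{lemma1}, established under (H1)–(H2) by the same BDG/Gronwall scheme), which gives $x^\varepsilon\to x$ and hence convergence in probability of the convex-combination arguments to the base point $(t,x,\E_0[x],u)$. Since $b_x,b_y,\dots$ are continuous and uniformly bounded, $\int_0^1 b_x^\lambda d\lambda\to b_x$ etc. in probability while remaining bounded, so each summand of $R^\varepsilon$ converges to $0$ in probability and is dominated by an integrable majorant such as $4\|b_x\|_\infty^2|x_1|^2$ (using that $x_1$ is square integrable); dominated convergence then yields the required $L^2$-convergence. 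Securing this dominated-convergence step together with the a priori $\varepsilon^2$-estimate for the mean-field dynamics is the main technical content of the first assertion.

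For the density process I would set $\zeta^\varepsilon(t)=\frac{\rho^\varepsilon(t)-\rho(t)}{\varepsilon}-\rho_1(t)$, and, writing $h(t,x^\varepsilon)\frac{\rho^\varepsilon-\rho}{\varepsilon}-h(t,x)\rho_1=h(t,x^\varepsilon)\zeta^\varepsilon+(h(t,x^\varepsilon)-h(t,x))\rho_1$ and Taylor-expanding $h$ in the $x$-slot, deduce from \eref{rhosolution1new} that $\zeta^\varepsilon$ solves the linear $dY$-equation
\[
d\zeta^\varepsilon(t)=\big[h(t,x^\varepsilon(t))\zeta^\varepsilon(t)+S^\varepsilon(t)\big]dY(t),\qquad\zeta^\varepsilon(0)=0,
\]
with bounded coefficient $h$ and source $S^\varepsilon$ assembled from $(h(t,x^\varepsilon)-h(t,x))\rho_1$, $\big(\int_0^1 h_x^\lambda d\lambda\big)\rho\,\delta^\varepsilon$ and $\big(\int_0^1 h_x^\lambda d\lambda-h_x\big)\rho\,x_1$. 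Since the supremum now sits outside the expectation, Itô's isometry gives $\E|\zeta^\varepsilon(t)|^2=\E\int_0^t|h\zeta^\varepsilon+S^\varepsilon|^2ds$, and Gronwall reduces matters to $\int_0^T\E|S^\varepsilon|^2ds\to0$. The first and third pieces of $S^\varepsilon$ are treated exactly like the remainder above, while the middle piece $\rho\,\delta^\varepsilon$ I would control by using that, under $\P$, $\rho$ is an exponential martingale with bounded exponent integrand and hence has finite moments of every order, so a Hölder split combined with the convergence of $\delta^\varepsilon$ (upgraded to a higher exponent by running the first estimate at that exponent) closes the bound. This interplay between the unbounded weight $\rho$ and the convergence of $\delta^\varepsilon$ is the most delicate point of the second half, and explains why the statement only controls $\sup_t\E[\,\cdot\,]$ rather than $\E[\sup_t\,\cdot\,]$.
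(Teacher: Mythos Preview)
Your proposal is correct and follows essentially the same route as the paper. For the first limit the paper simply invokes Lemma~4.3 of \cite{Yang} rather than writing out the BDG/Gronwall scheme you describe, and for the second limit the paper's decomposition of $\eta(t):=\zeta^\varepsilon(t)$ into the source terms $\rho(t)A^\varepsilon(t)\xi(t)$, $\rho(t)(A^\varepsilon(t)-h_x(t))x_1(t)$ and $\rho_1(t)(h(t,x^\varepsilon)-h(t,x))$ (with $A^\varepsilon=\int_0^1 h_x^\lambda d\lambda$, $\xi=\delta^\varepsilon$) is exactly your $S^\varepsilon$, after which It\^o isometry and Gronwall are applied just as you outline; the paper is terser about why the source integral is $o(\varepsilon)$, whereas you identify the H\"older/moment issue for $\rho$ explicitly.
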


\begin{proof} By Lemma 4.3 in \cite{Yang}, we have $\displaystyle
 \lim\limits_{\varepsilon\rightarrow 0}\E_0\left[\sup\limits_{0\leq
t\leq T}
\left|\frac{x^\varepsilon(t)-x(t)}{\varepsilon}-x_1(t)\right| ^2
\right]=0$.   In order to prove the second equality, we apply the
It\^o formula to
$\eta(t):=\frac{\rho^\varepsilon(t)-\rho(t)}{\varepsilon}-\rho_1(t)$
to obtain
\begin{eqnarray*}\left\{%
\begin{array}{lll}
    d\eta(t)&=& \Bigl(\eta(t)h(t,x^\varepsilon(t))+\rho(t)A^\varepsilon(t)\xi(t)+\rho(t)\bigl(A^\varepsilon(t)-h_x(t)\bigr)x_1(t)\\
   &&+ \rho_1(t)\bigl(h(t,x^\varepsilon(t))-h(t,x(t))\bigr)\Bigr)dY(t),\\
\eta(0)&=&0,
\end{array}%
\right.
\end{eqnarray*}
with $A^\varepsilon(t)=\displaystyle\int_0^1 h_x\bigl(x(t)+\theta \varepsilon(x_1(t)+\xi(t))\bigr)d\theta,
\  \xi(t)=\frac{x^\varepsilon(t)-x(t)}{\varepsilon}-x_1(t).$

Then we have
\begin{eqnarray*}\E\eta^2(t)&=& \E\int_0^T\Bigl(\eta(t)h(t,x^\varepsilon(t))+\rho(t)A^\varepsilon(t)\xi(t)+\rho(t)\bigl(A^\varepsilon(t)-h_x(t)\bigr)x_1(t)\\
   &&+ \rho_1(t)\bigl(h(t,x^\varepsilon(t))-h(t,x(t))\bigr)\Bigr)^2dt\\
   &\leq& K_0\E\int_0^T \eta^2(t)dt + o(\varepsilon),
\end{eqnarray*}
where $K_0>0$ is a constant. Now the  Gronwall  inequality yields
the lemma.
\end{proof}

Since  $u(\cdot)$ is an optimal control,  we have
\[
\frac{d}{d\vare}\Big|_{\vare=0} J(u_\varepsilon(\cdot)) \geq 0\,.
\]
Using  Lemma \ref{Lemma2new} and almost the same argument  as for
the equation (\ref{J}), we obtain
\begin{Lemma}Under (H1), (H2) and (H3), if $u(\cdot)$ is an optimal
control and $v(\cdot)$ is any given control process in ${\mathscr{U}}_{ad}$
such that $v(\cdot)-u(\cdot)\in {\mathscr{U}}_{ad}$, then we have
\begin{eqnarray}\label{variation}
 &&\E\Bigg\{ \int_0^T
\Bigl[\rho(t)\Bigl(l_x(t)x_1(t)+f^\prime(x(t))x_1(t)\E_0[l_y(t)]+l_v(t)(v(t)-u(t))\Bigr) \nonumber\\
&&  +  \rho_1(t)\Bigl(f(x(t))\E_0[l_y(t)]+l(u(t))\Bigr)\Bigr]dt+\rho(T)\phi_xx_1(T)\nonumber\\
&&+\rho(T)x_1(T)g^\prime(x(T))\E_0[\phi_y]  +\rho_1(T)\Bigl(g(x(T))\E_0[\phi_y]+\phi\Bigr)\Bigg\}\geq 0.
\end{eqnarray}
\end{Lemma}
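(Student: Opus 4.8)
The plan is to exploit the optimality of $u(\cdot)$ directly: since the perturbed controls $u^\varepsilon(\cdot)=u(\cdot)+\varepsilon(v(\cdot)-u(\cdot))$ are admissible for $\varepsilon\in[0,1]$ and $\mathcal J(\varepsilon):=J(u^\varepsilon(\cdot))$ attains its minimum at $\varepsilon=0$, the one-sided derivative satisfies $\frac{d}{d\varepsilon}\big|_{\varepsilon=0^+}\mathcal J(\varepsilon)\ge 0$. The entire content of the lemma is therefore the identification of this right derivative with the expectation displayed in \eref{variation}; the inequality (rather than the equality \eref{J} of Section 3) is forced precisely because the admissible perturbation here is one-directional. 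The crucial preliminary observation is that we work with the representation \eref{performanceP} of $J$ under the \emph{fixed} measure $\P$, not under the control-dependent measure $\P_0^{u^\varepsilon}$; this is what makes differentiation under the expectation sign legitimate, since only the integrand, and not the measure, depends on $\varepsilon$.

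First I would write out the difference quotient $\varepsilon^{-1}\big(\mathcal J(\varepsilon)-\mathcal J(0)\big)$ and split it, exactly as in the computation preceding \eref{J}, into a part coming from the variation of the Radon--Nikodym factor $\rho^\varepsilon$ and a part coming from the variation of the integrand $l$ and terminal cost $\phi$ at fixed outer $\rho$. For the latter I would insert and subtract intermediate terms so that each piece is a difference quotient in a single argument (the state $x^\varepsilon$, the mean-field argument $\E[\rho^\varepsilon(t)f(x^\varepsilon(t))]$, or the control $u^\varepsilon$), and then apply the mean value theorem, using the continuous differentiability and uniform Lipschitz continuity of the derivatives of $l,\phi,f,g$ from (H3).

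Next I would pass to the limit $\varepsilon\to0^+$. Here Lemma \ref{Lemma2new} supplies the $L^2$-convergence of the two difference quotients, $\varepsilon^{-1}(x^\varepsilon-x)\to x_1$ and $\varepsilon^{-1}(\rho^\varepsilon-\rho)\to\rho_1$, with $x_1$ solving the mean-field equation \eref{x1new} and $\rho_1$ solving \eref{rhosolution1new}; the difference quotient of the mean-field argument then converges to $\E[\rho_1(t)f(x(t))+\rho(t)f'(x(t))x_1(t)]$ by the chain rule. The at-most-quadratic growth of $l,\phi$ and the linear growth of $l_x,l_y,l_v,\phi_x,\phi_y$ in (H3), together with the uniform moment bounds on $x^\varepsilon,\rho^\varepsilon$, furnish an $\varepsilon$-uniform dominating bound, so that the limit may be taken inside $\E\int_0^T$. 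Collecting the resulting terms --- the $\rho_1$-terms $\rho_1(t)(f(x)\E_0[l_y]+l(u))$ and $\rho_1(T)(g(x(T))\E_0[\phi_y]+\phi)$, the $x_1$-terms, and the control term $\rho(t)l_v(t)(v-u)$ --- and reorganising the mean-field contributions by the identity $\E\big(A\,\E(B)\big)=\E\big(B\,\E(A)\big)$ already used in deriving \eref{J} (which is what turns the inner expectations into the factors $\E_0[l_y(t)]$ and $\E_0[\phi_y]$) yields precisely the bracketed expression in \eref{variation}, now with ``$\ge$'' in place of the ``$=$'' of \eref{J}.

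I expect the main obstacle to be the justification of this interchange of limit and expectation. The delicate point is not the state or $\rho$ variations in isolation but the mean-field argument $\E[\rho^\varepsilon(t)f(x^\varepsilon(t))]$ sitting inside the nonlinear maps $l$ and $\phi$: its difference quotient couples the convergences of both $x^\varepsilon$ and $\rho^\varepsilon$, and one must produce a dominating function uniform in $\varepsilon$ from the growth and Lipschitz hypotheses of (H3) and the $L^2$-estimates of Lemma \ref{Lemma2new}. Once uniform integrability of the family of difference quotients is established, dominated convergence closes the argument, and the remaining manipulations are the routine bookkeeping already carried out for the non-mean-field problem in \eref{J}.
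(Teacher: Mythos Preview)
Your proposal is correct and follows essentially the same route as the paper: the paper simply states that the inequality \eref{variation} is obtained ``using Lemma \ref{Lemma2new} and almost the same argument as for the equation (\ref{J})'', and your outline is precisely that argument, with the one-sided perturbation accounting for ``$\ge$'' in place of ``$=$''. Your explicit discussion of the uniform integrability needed to pass the limit inside the expectation is in fact more careful than what the paper records.
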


Now we shall write the optimality condition \eref{variation} by a
backward mean field stochastic differential equation.
 For any $u(\cdot)\in
\mathscr{U}_{ad}$ and the corresponding state trajectory $x(\cdot)$,
we define the first order adjoint process $(p(\cdot), q(\cdot),
R(\cdot,\cdot))$ as follows:
\begin{eqnarray}\label{smallp}\left\{%
\begin{array}{lll}
    -dp(t)&=& \Bigl(b_x(t)p(t)+\rho(t)\E[b_y(t)p(t)]+\sigma_x(t) q(t)+\rho(t)\E[\sigma_y(t) q(t)]\\
    &&+\rho(t)\Bigl(h_x(t)Q(t)+l_x(t)+f^\prime(x(t))\E_0[l_y(t)]\Bigr)+\displaystyle\int_{\R_0}\gamma_x(t,z)R(t,z)\mu(dz) \\
    &&+\displaystyle\int_{\R_0}\rho(t)\E[\gamma_y(t,z)R(t,z)]\mu(dz)\Bigr)dt-q(t)dW_1(t)- \displaystyle\int_{\R_0} R(t,z)\tilde{N}(dt,dz),\\
p(T)&=&\rho(T)\Bigl(\phi_x+g^\prime(x(T))\E_0[\phi_y]\Bigr),
\end{array}%
\right.
\end{eqnarray}
where $(P(\cdot), Q(\cdot), G(\cdot,\cdot))$ is defined by
\begin{eqnarray}\label{P}\left\{%
\begin{array}{lll}\label{Q}
    -dP(t)&=& \Bigl(l(u(t))+f(x(t))\E_0[l_y(t)]\Bigr)dt-Q(t)dW_2(t)
       -\displaystyle\int_{\R_0} G(t,z)\tilde{N}(dt,dz),\\
P(T)&=& \phi+g(x(T))\E_0[\phi_y],
\end{array}%
\right.
\end{eqnarray}
which is a mean-field backward stochastic differential equation (BSDE
for short) and from  \cite{Yang} this BSDE admits  unique solution
triplet $(p, q, R)$.  Then we define the usual Hamiltonian
associated with the mean-field stochastic control problem as follows
\begin{eqnarray}\label{Hnew}H(t,x,y,v; p,q,R(\cdot),Q,\rho)&=& b(t,x,y,v)p+
\sigma(t,x,y,v)q+\int_{\R_0}R(t,z)\gamma(t,x,y,v,z)\mu(dz)\nonumber\\
&&+h(t,x)Q+l(t,x,y,v)\rho.
\end{eqnarray}
\begin{theorem}\label{t.4.1}
 Under (H1), (H2) and (H3), if $u(\cdot)$ is an optimal control and
$v(\cdot)$ is any given control process in ${\mathscr{U}}_{ad}$ such that $v(\cdot)-u(\cdot)\in {\mathscr{U}}_{ad}$, then it is necessary to satisfy that
\begin{eqnarray}\label{u1}\E\left[H_v(t,x(t),\E_0[f(x(t))], u(t); p(t),q(t),
R(t,\cdot),Q(t),\rho(t))(v(t)-u(t))\left|\right. \mathscr{F}_t^Y\right]\geq
0,\label{e.h}
\end{eqnarray}
\end{theorem}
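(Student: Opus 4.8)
The plan is to turn the integrated first-order condition \eref{variation} into the pointwise Hamiltonian inequality \eref{e.h} by pairing the adjoint processes with the variation processes $x_1$ and $\rho_1$ through It\^o's product formula, exactly as for the full-information mean-field maximum principle; the extra work is the bookkeeping forced by the partial observation and the change of measure. First I would rewrite \eref{variation}: by the terminal conditions $p(T)=\rho(T)(\phi_x+g'(x(T))\E_0[\phi_y])$ and $P(T)=\phi+g(x(T))\E_0[\phi_y]$ in \eref{smallp} and \eref{P}, the two boundary contributions in \eref{variation} are precisely $p(T)x_1(T)$ and $P(T)\rho_1(T)$, so that it reads
\[
\E\Big\{\int_0^T\big[\rho(l_xx_1+f'(x)x_1\E_0[l_y]+l_v(v-u))+\rho_1(f(x)\E_0[l_y]+l(u))\big]\,dt+p(T)x_1(T)+P(T)\rho_1(T)\Big\}\ge0.
\]

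Second, I would apply the It\^o product rule for jump-diffusions to $p(t)x_1(t)$ and to $P(t)\rho_1(t)$, integrate on $[0,T]$, and take $\P$-expectation; since $x_1(0)=\rho_1(0)=0$ and all $dW_1$, $dY$ and $\tilde N$ stochastic integrals are $\P$-martingales (the integrability coming from (H1)--(H3) together with the $L^2$ estimates behind Lemma \ref{Lemma2new}), only drift and bracket terms survive. Then I would carry out three groups of cancellations. \textbf{(i)} The diagonal terms $b_x,\sigma_x,\gamma_x$ produced by $p\,dx_1$ cancel the corresponding terms in $x_1\,dp$ from \eref{smallp}. \textbf{(ii)} The mean-field terms: using $\frac{d\P_0}{d\P}=\rho(T)$ and the $\P$-martingale property of $\rho$ one gets $\E_0[x_1(t)]=\E[\rho(t)x_1(t)]$, and since each of $\E[b_yp]$, $\E[\sigma_yq]$, $\E[\gamma_yR]$ is deterministic, the products $\E_0[x_1]\,\E[b_yp]$ etc. produced by $x_1$ exactly match the correction terms $\rho\,\E[b_yp]$ etc. built into \eref{smallp}, so they cancel. \textbf{(iii)} The observation terms: the cross-variation of $P$ and $\rho_1$ produces $Q\rho h_x x_1$, which cancels the $-\rho h_x Q x_1$ coming from the $\rho h_x Q$ drift of \eref{smallp}, while the Girsanov relation $dW_2=dY-h(t,x)\,dt$ turns the $Q\,dW_2$ term of \eref{P} into an extra drift $-Qh$ whose product with $\rho_1$ cancels the residual $Q\rho_1 h$ arising from $P\,d\rho_1$. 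This last cancellation is exactly the reason for the term $h(t,x)Q$ in the Hamiltonian \eref{Hnew}.

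After these cancellations the running $l_x$, $f'\E_0[l_y]$, $l(u)$ and $f\E_0[l_y]$ contributions of the rewritten inequality are annihilated by their It\^o counterparts, and the only survivors are those carrying the factor $v-u$, giving $\E\int_0^T (v-u)\big[b_v p+\sigma_v q+\int_{\R_0}\gamma_vR\,\mu(dz)+\rho l_v\big]\,dt\ge0$, that is $\E\int_0^T(v-u)H_v\,dt\ge0$ with $H_v$ read off from \eref{Hnew}. To localize in time I would use that $v-u$ is $\mathscr F_t^Y$-adapted: conditioning gives $\E\int_0^T(v-u)\E[H_v\mid\mathscr F_t^Y]\,dt\ge0$; then, for fixed $t$, testing with the admissible direction $\beta\,\mathbf 1_{(t,t+\tau]}(v-u)$ for an arbitrary bounded nonnegative $\mathscr F_t^Y$-measurable $\beta$ (admissible for small $\varepsilon$ by convexity of $U$), dividing by $\tau$ and letting $\tau\downarrow0$ by Lebesgue differentiation, exactly as in the needle-variation step of the Malliavin-based theorem of Section~3, yields $\E[H_v\,(v(t)-u(t))\mid\mathscr F_t^Y]\ge0$ for a.e. $t$.

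I expect the main obstacle to be group \textbf{(iii)}: getting the observation and measure-change terms to cancel cleanly. One must keep track of the fact that \eref{P} is driven by the original $W_2$ while the entire computation is run under $\P$, where $Y$ (not $W_2$) is Brownian, so the Girsanov drift $-Qh\,dt$ cannot be dropped; a closely related source of error is mishandling the two expectations $\E_0$ and $\E$ in group \textbf{(ii)}. Justifying the Fubini interchanges and the differentiation in $\tau$ at the end also relies on the uniform $L^2$ bounds, but these are routine given (H1)--(H3).
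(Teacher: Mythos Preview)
Your proposal is correct and is exactly the paper's approach: apply It\^o's product formula to $p(t)x_1(t)$ and $P(t)\rho_1(t)$, substitute the resulting identities into \eref{variation}, and reduce to the integrated inequality $\E\int_0^T H_v\,(v-u)\,dt\ge 0$; the paper in fact stops there and does not write out the localization step you add at the end. One small bookkeeping correction in your group \textbf{(iii)}: the term $Q\rho_1 h$ that cancels the Girsanov drift $-\rho_1 Q h$ comes from the cross-variation $d\langle P,\rho_1\rangle=(\rho_1 h+\rho h_x x_1)Q\,dt$, not from $P\,d\rho_1$, which is a pure $dY$-integral and hence a $\P$-martingale.
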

 where $(p(\cdot), q(\cdot), R(\cdot,\cdot))$ and $Q(\cdot)$ are the solutions of (\ref{smallp}) and (\ref{P}), respectively.

\begin{proof}  Applying It\^o's formula to $\rho_1(\cdot)P(\cdot)$ and
$p(\cdot)x_1(\cdot),$ we obtain
\begin{eqnarray*}\E\Bigl[\rho_1(T)\Bigl(\phi+g(x(T))\E_0[\phi_y]\Bigr)\Bigr]&=&\E\int_0^T\Bigl[\rho(t)Q(t)h_x(t)x_1(t)
-\rho_1(t)\Bigl(l(u(t))\\
&&+f(x(t))\E_0[l_y(t)]\Bigr)\Bigr]dt
\end{eqnarray*}
and
\begin{eqnarray*}&&\E\Bigl[x_1(T)\rho(T)\Bigl(\phi_x+g^\prime(x(T))\E_0[\phi_y]\Bigr)\Bigr]=\E \int_0^T\Bigl[\Bigl(b_v(t)p(t)+\sigma_v(t)q(t)\\
&&\quad+\int_{\R_0}\gamma_v(t,z)R(t,z)\mu(dz)\Bigr)v(t)-\Bigl(l_x(t)+h_x(t)Q(t)+f^\prime(x(t))\E_0[l_y(t)]\Bigr)\rho(t)x_1(t)\Bigr]dt.
\end{eqnarray*}
Inserting the above two equations into the variational inequality (\ref{variation}), we have
$$\E\int_0^T\Bigl(b_v(t)p(t)+\sigma_v(t)q(t)+\int_{\R_0}\gamma_v(t,z)R(t,z)\mu(dz)+l_v(t)\Bigr)(v(t)-u(t))dt\geq 0,$$
Thus, the proof is completed.
\end{proof}
\begin{remark}\label{r.4.2}
If $u(\cdot)$ is a local minimum for the performance functional $J$
(given by (\ref{performanceP})), in the sense that for all bounded
$v(\cdot)\in \mathscr{U}_{ad},$ there exists an $\delta >0$ such
that $u(\cdot)+\varepsilon v(\cdot)\in \mathscr{U}_{ad}$ for any
$\varepsilon\in (-\delta,\delta)$ and
$${\cal J}(\varepsilon)=J(u(\cdot)+\varepsilon v(\cdot)),\quad \varepsilon\in (-\delta,\delta),$$
attains its minimum at $\varepsilon=0,$   then  (\ref{variation})
and hence \eref{e.h}  are    identities.
\end{remark}

\vspace{0.2cm} \noindent{\bf  Applications} \vspace{0.2cm}

We aim to illustrate Theorem \ref{t.4.1}  by a linear-quadratic (LQ)
example as in Section  3. Consider the flowing LQ optimal control
problem with partial information. Namely, we want to minimize $
J(v(\cdot)) $, where
\begin{eqnarray*}\label{examplenew}
&&J(v(\cdot))=\frac{1}{2} \E_0\left\{\int_0^T \Bigl[L(t)\Bigl( x^v(t)-\E_0[x^v(t)]\Bigr)^2+O(t)\Bigl(v(t)-M(t)\Bigr)^2\Bigr]dt\right.\nonumber\\
&&\quad\quad\quad\quad\left.+N\Bigl(x^v(T)-\E_0[x^v(T)]\Bigr)^2\right\}
\end{eqnarray*}
subject to
\begin{eqnarray*}\label{xnewnew}\left\{%
\begin{array}{lll}
    dx^v(t)&=& \Bigl(A(t)x^v(t)+B(t)\E_0[x^v(t)]+C(t)v(t)\Bigr)dt+\Bigl(D(t)x^v(t)+E(t)\E_0[x^v(t)]\\
   && +F(t)v(t)\Bigr)dW_1(t)
 +\displaystyle\int_{\R_0}\Bigl(S(t,z)x^v(t-)+K(t,z)\E_0[x^v(t-)]\\
 &&
 +I(t,z)v(t)\Bigr)\tilde{N}(dt,dz),\\
x^v(0)&=&x_0\in \R. \\
\end{array}%
\right.
\end{eqnarray*}
The observation is $ dY(t)= h(t,x^v(t))dt+d{W}_2(t),\ \  Y(0)=0.$

\vspace{0.2cm} Here $L(\cdot)\geq 0$, $O(\cdot)>0$,
$\frac{1}{O(\cdot)}$, $M(\cdot)$, $A(\cdot)$, $B(\cdot)$,
$C(\cdot)$, $D(\cdot)$, $E(\cdot)$, $F(\cdot)$, $S(\cdot,\cdot)$,
$K(\cdot,\cdot)$, $I(\cdot,\cdot)$ are uniformly bounded and
deterministic; $N\geq 0$ is a constant. $h$ satisfies the assumption
(H2). Theorem  \ref{t.4.1} is valid. Thus, we define the Hamiltonian
as below.
\begin{eqnarray}\label{Hnewnew}&&H(t,x,y,v; p,q,R(\cdot),Q,\rho)= \bigl(A(t)x+B(t)y
+C(t)v\bigr)p+ \bigl(D(t)x+E(t)y+F(t)v\bigr)q\nonumber\\
&&\quad\quad\quad\quad+\int_{\R_0}R(t,z)\Bigl(S(t,z)x+K(t,z)y+I(t,z)v\Bigr)\mu(dz)+h(t,x)Q\nonumber\\
&&\quad\quad\quad\quad+\frac{1}{2}\rho L(t)(x-y)^2+\frac{1}{2}\rho O(t)(v-M(t))^2.
\end{eqnarray}
The corresponding adjoint processe $(p(\cdot), q(\cdot), R(\cdot,\cdot))$  is defined as follows:
\begin{eqnarray}\label{smallpnew}\left\{%
\begin{array}{lll}
    -dp(t)&=& \Bigl[A(t)p(t)+\rho(t)\E[B(t)p(t)]+D(t) q(t)+\rho(t)\E[E(t) q(t)]\\
      &&+\rho(t)\Bigl(h_x(t)Q(t)+L(t)\bigl(x(t)-\E_0 [x(t)]\bigr)\Bigr)\\
     &&+\displaystyle\int_{\R_0}S(t,z)R(t,z)\mu(dz)+\displaystyle\int_{\R_0}\rho(t)\E[K(t,z)R(t,z)]\mu(dz)\Bigr]dt\\
  &&-q(t)dW_1(t)- \displaystyle\int_{\R_0} R(t,z)\tilde{N}(dt,dz),\\
p(T)&=&N\rho(T)\Bigl(x(T)-\E_0[x(T)]\Bigr),
\end{array}%
\right.
\end{eqnarray}
where $(P(\cdot), Q(\cdot), G(\cdot,\cdot))$ is defined by
\begin{eqnarray*}\label{Pnew}\left\{%
\begin{array}{lll}
    -dP(t)&=& \frac{1}{2}\Bigl[L(t)\Bigl( x(t)-\E_0[x(t)]\Bigr)^2+O(t)\Bigl(u(t)-M(t)\Bigr)^2\Bigr]dt\\
    &&-Q(t)dW_2(t)
       -\displaystyle\int_{\R_0} G(t,z)\tilde{N}(dt,dz),\\
P(T)&=& \frac{1}{2}N\Bigl(x(T)-\E_0[x(T)]\Bigr)^2.
\end{array}%
\right.
\end{eqnarray*}
By Remark \ref{r.4.2}  if $u(\cdot)$ is local minimum, then it is
necessary to satisfy
\begin{eqnarray*}\E\left[\rho(t)O(t)(u(t)-M(t))+C(t)p(t)+F(t)q(t)+\int_{\R_0}R(t,z)I(t,z)
\mu(dz)\left|\right. \mathscr{F}_t^Y\right]= 0,\end{eqnarray*}
where $(p(\cdot), q(\cdot), R(\cdot,\cdot))$ is the solution to (\ref{smallpnew}). Then
\begin{eqnarray}\label{u}u(t)=-\frac{1}{\rho(t)O(t)}\E\left[C(t)p(t)+F(t)q(t)
+\int_{\R_0}R(t,z)I(t,z)\mu(dz)\left|\right. \mathscr{F}_t^Y\right]+M(t).
\end{eqnarray}
\begin{proposition} If $u(\cdot)$ is an optimal control strategy, then it is necessary to satisfy (\ref{u}).
\end{proposition}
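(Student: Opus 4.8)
The plan is to specialize Theorem \ref{t.4.1} to the linear--quadratic data and solve the resulting first-order condition for $u(t)$. First I would check that the present coefficients fit the framework: writing $b(t,x,y,v)=A(t)x+B(t)y+C(t)v$, $\sigma(t,x,y,v)=D(t)x+E(t)y+F(t)v$, $\gamma(t,x,y,v,z)=S(t,z)x+K(t,z)y+I(t,z)v$, together with $f(x)=g(x)=x$, $l(t,x,y,v)=\frac{1}{2}L(t)(x-y)^2+\frac{1}{2}O(t)(v-M(t))^2$ and $\phi(x,y)=\frac{1}{2}N(x-y)^2$, one verifies (H1), (H2) and (H3): the drift, diffusion and jump coefficients are linear with uniformly bounded deterministic coefficients, the cost is quadratic with linear (hence uniformly Lipschitz) derivatives, and $h$ is assumed to satisfy (H2). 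Hence Theorem \ref{t.4.1} applies. Since an optimal control is in particular a local minimum in the sense of Remark \ref{r.4.2}, the variational inequality \eref{e.h} becomes an identity; as $v(t)-u(t)$ ranges over the bounded $\mathscr{F}_t^Y$-measurable processes, this forces
\begin{eqnarray*}
\E\left[H_v(t,x(t),\E_0[f(x(t))],u(t);p(t),q(t),R(t,\cdot),Q(t),\rho(t))\mid\mathscr{F}_t^Y\right]=0 .
\end{eqnarray*}

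Next I would compute $H_v$ explicitly from the Hamiltonian \eref{Hnewnew}. Because $h(t,x)Q$ and $\frac{1}{2}\rho L(t)(x-y)^2$ do not depend on $v$, and $b_v=C(t)$, $\sigma_v=F(t)$, $\gamma_v=I(t,z)$, $l_v=O(t)(v-M(t))$, differentiation in $v$ yields
\begin{eqnarray*}
H_v=C(t)p+F(t)q+\int_{\R_0}R(t,z)I(t,z)\mu(dz)+\rho(t)O(t)(v-M(t)) .
\end{eqnarray*}
Evaluating at $v=u(t)$ and substituting into the identity above reproduces the intermediate relation recorded just before the proposition, namely that the conditional expectation given $\mathscr{F}_t^Y$ of $C(t)p(t)+F(t)q(t)+\int_{\R_0}R(t,z)I(t,z)\mu(dz)+\rho(t)O(t)(u(t)-M(t))$ vanishes.

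Finally I would solve for $u(t)$. The control $u(t)$ is $\mathscr{F}_t^Y$-adapted by admissibility, $\rho(t)$ is $\mathscr{F}_t^Y$-measurable since it solves the SDE \eref{rhosolution} driven only by $Y$, and $O(t),M(t)$ are deterministic; thus the whole term $\rho(t)O(t)(u(t)-M(t))$ can be pulled out of $\E[\,\cdot\mid\mathscr{F}_t^Y]$. Using $\rho(t)>0$ (it is an exponential) and $O(t)>0$, I divide by $\rho(t)O(t)$ and rearrange to obtain \eref{u}. The argument is essentially a substitution, so I do not expect a genuine obstacle; the points that require care are verifying (H1)--(H3) so that Theorem \ref{t.4.1} is truly applicable, and the measurability/positivity bookkeeping that legitimizes extracting $\rho(t)O(t)(u(t)-M(t))$ from the conditional expectation and then dividing.
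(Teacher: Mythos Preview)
Your approach is exactly the paper's: apply Theorem \ref{t.4.1} together with Remark \ref{r.4.2} to the LQ data, compute $H_v$ from \eref{Hnewnew}, obtain the conditional-expectation identity displayed just before the proposition, and rearrange to \eref{u}. The one point to correct is your justification for extracting $\rho(t)$ from the conditional expectation: $\rho(t)$ is \emph{not} $\mathscr{F}_t^Y$-measurable in general, because although \eref{rhosolution} is driven by $Y$, its integrand $h(t,x(t))\rho(t)$ depends on the state $x(t)$, which is adapted to the full filtration generated by $W_1$, $Y$ and $\tilde N$. The paper performs this same rearrangement without comment, so your argument tracks the paper faithfully, but the reason you give for that step is not valid.
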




\footnotesize

\end{document}